\theoremstyle{plain}
\newtheorem{thm}{Theorem}[section]
\newtheorem{lemma}[thm]{Lemma}
\newtheorem{cor}[thm]{Corollary}
\newtheorem{conj}[thm]{Conjecture}
\newtheorem*{thm*}{Theorem}
\newtheorem*{lemma*}{Lemma}
\newtheorem*{prop*}{Proposition}
\newtheorem*{cor*}{Corollary}
\newtheorem*{conj*}{Conjecture}
\theoremstyle{definition}
\newtheorem{defn}[thm]{Definition}
\newtheorem{ex}[thm]{Example}
\theoremstyle{remark}
\newcommand{\rr}{\mathbb{R}}
\newcommand{\cc}{\mathbb{C}}
\newcommand{\ind}{\mbox{$\perp \kern-5.5pt \perp$}}
\begin{document}

\title{Initial Ideals of Pfaffian Ideals}
\author{Colby Long}

\begin{abstract}

We resolve a conjecture about a class of binomial initial ideals of $I_{2,n}$, the ideal of the Grassmannian, Gr$(2,\mathbb{C}^n$), which are associated to phylogenetic trees. For a weight vector $\omega$ in the tropical Grassmannian, $in_\omega(I_{2,n}) = J_\mathcal{T}$ is the ideal associated to the tree $\mathcal{T}$. The ideal generated by the $2r \times 2r$ subpfaffians
of a generic $n \times n$ skew-symmetric matrix is precisely $I_{2,n}^{\{r-1\}}$, the 
$(r-1)$-secant of $I_{2,n}$. We prove necessary and 
sufficient conditions on the topology of $\mathcal{T}$ in order for $in_\omega(I_{2,n})^{\{2\}} = J_\mathcal{T}^{\{2\}}$. We also give a new class
of prime initial ideals of the Pfaffian ideals.
\end{abstract}

\maketitle

%
%
%

%
%
%

\section{Introduction}
\label{Introduction}

The $r$-secant variety of a projective variety
 $X \subseteq \mathbb{P}^{m - 1}$ is 
$X^{\{r\}} := \overline{ \{x_1 + \ldots + x_r : x_i \in X \}},$
where the closure is taken in the Zariski topology.
Similarly, we define the 
 \emph{$r$-secant ideal} of a 
homogenous ideal, $ I(X)^{\{r\}} := I(X^{\{r\}})$. 
Secant varieties and ideals are classic objects of study in algebraic geometry. They are also of statistical interest as the sets of distributions associated to many statistical models exhibit the structure of secant varieties \cite{Garcia2005, Drton2008, Sullivant2008}. 
Another common operation on ideals is to take 
an \emph{initial ideal} with respect to some weight 
vector.
An initial ideal shares many properties of the original ideal but
is often more easily studied combinatorially.

In \cite{Sturmfels2006}, the authors explore the relationship between the secant ideal of an initial ideal and the initial ideal of a secant ideal. In particular, they explore
under what conditions these operations commute.
 In this paper, we investigate the relationship between
secant ideals of initial ideals and initial ideals of secant ideals for a class of ideals in bijection with binary leaf-labeled trees which we call the \emph{Pl\"ucker tree ideals.}

The Pl\"ucker tree ideals are so named
because they can be constructed as initial ideals of the Pl\"ucker ideal, $I_{2,n}$, which is the vanishing ideal of the Grassmannian, $Gr(2,\mathbb{C}^n)$, in the Pl\"ucker coordinates. The secant ideals of the 
Pl\"ucker tree ideals are then initial ideals of the well-known Pfaffian ideals. 
We let $J_\mathcal{T}$ denote the 
Pl\"ucker tree ideal
associated to $\mathcal{T}$. These ideals are discussed in \cite{Speyer2004} where the following theorem is proven.

\begin{thm}
\cite{Speyer2004}
 Let $\mathcal{T}$ be a binary phylogenetic $[n]$-tree. There exists a weight vector $\omega \in \mathbb{R}^{n \choose 2}$ and a sign vector $\tau \in \{\pm1\}^{n \choose 2}$ such that $J_\mathcal{T} = \tau \cdot in_\omega(I_{2,n})$, where the sign vector multiplies coordinate $p_{ij}$ by $\tau_{ij}$.
\end{thm}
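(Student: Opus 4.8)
The plan is to build the weight vector $\omega$ directly from the edge lengths of $\mathcal{T}$ and then check, quartet by quartet, that the resulting initial ideal is the sign-twist of $J_\mathcal{T}$. The starting input is that $I_{2,n}$ is generated by the three-term Pl\"ucker relations
$$p_{ij}p_{kl} - p_{ik}p_{jl} + p_{il}p_{jk}, \qquad 1 \le i < j < k < l \le n,$$
and, in fact, that these relations form a Gr\"obner basis of $I_{2,n}$. Assign to each edge of $\mathcal{T}$ a generic positive length, and set $\omega_{ij} := -d_\mathcal{T}(i,j)$, where $d_\mathcal{T}(i,j)$ is the sum of the edge lengths along the unique path between leaves $i$ and $j$ (the sign chosen so that, under the convention that $in_\omega$ selects terms of minimal weight, the correct monomials survive). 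Thus $\omega \in \mathbb{R}^{\binom{n}{2}}$ is, up to sign, a tree metric for $\mathcal{T}$.

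The structural engine is the four-point condition for tree metrics: for any four leaves whose quartet topology induced by $\mathcal{T}$ is $ij \mid kl$, the three pairwise sums satisfy
$$d_\mathcal{T}(i,j) + d_\mathcal{T}(k,l) < d_\mathcal{T}(i,k) + d_\mathcal{T}(j,l) = d_\mathcal{T}(i,l) + d_\mathcal{T}(j,k),$$
the common larger value exceeding the smaller by exactly twice the length of the internal edge of the quartet. Passing through $\omega = -d_\mathcal{T}$, the two terms $p_{ik}p_{jl}$ and $p_{il}p_{jk}$ attain the (tied) minimal $\omega$-weight while $p_{ij}p_{kl}$ does not, so $in_\omega$ of the relation on $\{i,j,k,l\}$ is the binomial $\pm(p_{ik}p_{jl} - p_{il}p_{jk})$. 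I would then define the sign vector $\tau \in \{\pm 1\}^{\binom{n}{2}}$ so that each such binomial is carried onto the corresponding generator of $J_\mathcal{T}$; since every variable $p_{ij}$ appears in many relations, the combinatorial point is that these sign requirements are globally consistent, which I expect to follow because they assemble into a solvable $\mathbb{F}_2$-linear (cocycle) system supported on the tree.

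To finish, I must show that the binomials $in_\omega(g)$, as $g$ ranges over the Pl\"ucker generators, generate all of $in_\omega(I_{2,n})$ and not merely a proper subideal. Here I would invoke the Gr\"obner-basis property: refining $\omega$ by any term order, the three-term relations remain a Gr\"obner basis, so their $\omega$-initial forms generate $in_\omega(I_{2,n})$. The initial ideal is then exactly the ideal generated by the quartet binomials, and applying $\tau$ identifies it with $J_\mathcal{T}$, as claimed by the theorem of \cite{Speyer2004}.

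The main obstacle is precisely this exactness step: guaranteeing that no extra initial forms appear. Establishing that the three-term relations stay a Gr\"obner basis with respect to every term order refining a tree weight $\omega$ is the technical heart, handled either by citing the known universal Gr\"obner basis of $I_{2,n}$ or by checking directly that all S-pairs among the quartet binomials reduce to zero, using the four-point condition to track which monomials survive. A secondary subtlety is verifying that the genericity of the edge lengths forces all the strict inequalities above to hold simultaneously, so that every quartet contributes a genuine binomial and $\omega$ lies in the relative interior of the maximal cone of the tropical Grassmannian indexed by $\mathcal{T}$.
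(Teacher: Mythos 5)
First, a point of reference: the paper does not prove this statement at all --- it is quoted from \cite{Speyer2004}, and the only thing the paper itself establishes (in Section \ref{GTI}) is that for a tree labeled \emph{circularly} with respect to a planar embedding one may take $\tau$ to be the all-ones vector, with the description of $in_\omega(I_{2,n})$ imported from \cite[Corollary 4.4]{Speyer2004}. So your proposal is being measured against the standard argument rather than against anything in this text. Your overall strategy (weight vector from a tree metric, four-point condition to identify the two maximal-weight terms of each three-term Pl\"ucker relation) is the right one, and your observation that each quartet contributes a binomial initial form is correct; note, though, that genericity of the edge lengths is not needed --- any strictly positive lengths give a strict four-point inequality for every quartet, since $\mathcal{T}$ is binary.

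The genuine gap is the existence of the sign vector $\tau$, which you assert ``I expect to follow'' from solvability of an $\mathbb{F}_2$-linear system. That system has one constraint per quartet, roughly $\binom{n}{4}$ equations in $\binom{n}{2}$ unknowns: for a quartet $i<j<k<l$ of topology $ij|kl$ or $il|jk$ the initial form already has the signs of a difference of monomials and the constraint is homogeneous, but for a ``crossing'' quartet of topology $ik|jl$ the two surviving terms are $p_{ij}p_{kl}$ and $p_{il}p_{jk}$ \emph{with the same sign}, forcing the inhomogeneous condition $\tau_{ij}\tau_{kl}=-\tau_{il}\tau_{jk}$. Solvability of this heavily overdetermined system is precisely the nontrivial content of the theorem, it is not a cocycle condition ``supported on the tree,'' and it does not follow from the variables appearing in many relations. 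The standard repair is to exhibit a solution explicitly: choose a planar embedding of $\mathcal{T}$, let $\sigma$ be the induced circular order of the leaves, and define $\tau$ from the inversions of $\sigma$ (equivalently, transport the circularly labeled situation, where no crossing quartets occur and $\tau=\mathbf{1}$, back through the sign changes $p_{\sigma(i)\sigma(j)}=\pm p_{ij}$ of the Pl\"ucker coordinates). Your second step also leans on a shaky citation: the three-term Pl\"ucker relations are \emph{not} a universal Gr\"obner basis of $I_{2,n}$, so ``refining $\omega$ by any term order'' does not automatically keep them a Gr\"obner basis. What is true, and what the exactness step actually needs, is that after the circular relabeling the crossing-monomial order $\prec_{circ}$ of Theorem \ref{pfaffiangb} refines $\omega$ and the three-term relations are a Gr\"obner basis with respect to $\prec_{circ}$ (this is exactly Corollary \ref{gbforinitial} with $r=2$); alternatively one can argue as in \cite{Speyer2004} that the quartet-binomial ideal is prime of the same dimension and Hilbert function as $in_\omega(I_{2,n})$. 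With those two repairs your outline becomes the standard proof.
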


They also appear in \cite{Sullivant2008} which 
discusses how these ideals and their secants are connected to Gaussian graphical models and concludes with the following conjecture. 

\begin{conj} 
\label{sethsconjecture}\cite[Conjecture 7.10]{Sullivant2008} Let $\mathcal{T}$ be a binary phylogenetic $[n]$-tree , $\omega \in \mathbb{R}^{n \choose 2}$ a weight vector, and 
$\tau \in \{\pm1\}^{n \choose 2}$ a sign vector such that $J_\mathcal{T} = \tau \cdot in_\omega(I_{2,n})$, then $\tau \cdot in_\omega(I_{2,n}^{\{r\}}) = J_\mathcal{T}^{\{r\}}$.
\end{conj}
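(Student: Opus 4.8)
The plan is to strip away the sign vector, then reduce the conjecture to a Hilbert-function comparison, isolating the flatness of the secant degeneration as the one essential point. First I would dispose of $\tau$. Since $\tau$ acts as a diagonal linear automorphism of the Pl\"ucker coordinate ring $S$, it scales each monomial by a constant, hence commutes with the formation of $\omega$-initial forms, giving $in_\omega(\tau \cdot K) = \tau \cdot in_\omega(K)$ for every ideal $K$; being invertible and linear it also intertwines with joins, so $(\tau \cdot K)^{\{r\}} = \tau \cdot K^{\{r\}}$. Applying both facts to the right-hand side, $J_\mathcal{T}^{\{r\}} = (\tau \cdot in_\omega(I_{2,n}))^{\{r\}} = \tau \cdot \big(in_\omega(I_{2,n})^{\{r\}}\big)$, and cancelling $\tau$ reduces Conjecture~\ref{sethsconjecture} to the sign-free assertion
\[ in_\omega\big(I_{2,n}^{\{r\}}\big) = in_\omega(I_{2,n})^{\{r\}}. \]

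Second, I would invoke the easy one-sided containment. Writing the $r$-th secant as an elimination ideal of a join in $r$ copies of the variables, and using that $in_\omega$ commutes with elimination for an elimination order while $in_\omega$ of a sum contains the sum of the $in_\omega$'s, one obtains $in_\omega(I_{2,n})^{\{r\}} \subseteq in_\omega(I_{2,n}^{\{r\}})$ for free; this is the elementary direction of the Sturmfels--Sullivant analysis in \cite{Sturmfels2006}. Thus only the reverse inclusion $in_\omega(I_{2,n}^{\{r\}}) \subseteq in_\omega(I_{2,n})^{\{r\}}$ remains, which is precisely the statement that the degeneration $I_{2,n} \rightsquigarrow J_\mathcal{T}$ is \emph{delightful} for every $r$.

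Third, I would turn the remaining inclusion into a numerical check. Both ideals are homogeneous and one contains the other, so it suffices to prove that they have equal Hilbert functions. Flatness of the Gr\"obner degeneration gives $\mathrm{HF}\big(S/in_\omega(I_{2,n}^{\{r\}})\big) = \mathrm{HF}\big(S/I_{2,n}^{\{r\}}\big)$, and the sign change preserves Hilbert functions, so the task becomes to establish
\[ \mathrm{HF}\big(S/J_\mathcal{T}^{\{r\}}\big) = \mathrm{HF}\big(S/I_{2,n}^{\{r\}}\big). \]
On the right I would use the known Hilbert series of the Pfaffian ideals (via the minimal free resolutions of Jozefiak--Pragacz--Weyman), since $I_{2,n}^{\{r\}}$ is generated by the $(2r+2)\times(2r+2)$ subpfaffians of the generic skew-symmetric matrix. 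On the left I would exploit the binomial, essentially toric, nature of $J_\mathcal{T}$ together with the tree combinatorics, building $\mathcal{T}$ up edge by edge and computing the secants of $J_\mathcal{T}$ by an induction modeled on Sullivant's toric fiber products \cite{Sullivant2008}, where the behaviour of secants across an internal edge is governed by a compatibility condition.

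The hard part, and where I expect the argument to be genuinely delicate, is exactly this flatness in the last step: passing to an initial ideal can create excess or embedded components in the secant, so that $in_\omega(I_{2,n}^{\{r\}})$ is a priori strictly larger than $in_\omega(I_{2,n})^{\{r\}}$; equivalently one must rule out a drop in the Hilbert function of the secant under the tree degeneration. Because $\mathrm{HF}\big(S/J_\mathcal{T}^{\{r\}}\big)$ is computed from the topology of $\mathcal{T}$ whereas $\mathrm{HF}\big(S/I_{2,n}^{\{r\}}\big)$ is tree-independent, the compatibility hypotheses required to run the toric-fiber-product induction are themselves conditions on the shape of $\mathcal{T}$, and proving that they hold uniformly for all binary phylogenetic trees and all $r$ is the crux. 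I would attack it first for caterpillar trees, where $J_\mathcal{T}$ is the standard diagonal initial ideal and the subpfaffians should form a Gr\"obner basis with respect to $\omega$, and then try to propagate the equality to arbitrary topologies by induction on the internal edges.
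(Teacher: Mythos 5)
There is a fundamental problem: the statement you are trying to prove is false, and the paper's role with respect to Conjecture~\ref{sethsconjecture} is to \emph{refute} it (Corollary~\ref{cherrycorollary}), not to prove it. Your final step --- matching $\mathrm{HF}\bigl(S/J_\mathcal{T}^{\{r\}}\bigr)$ with $\mathrm{HF}\bigl(S/I_{2,n}^{\{r\}}\bigr)$ --- cannot be carried out, because these Hilbert functions genuinely differ whenever $\mathcal{T}$ has more than $2r$ cherries. Concretely: no generator of $J_\mathcal{T}$ involves a cherry variable, so $V(J_\mathcal{T}) = V \times \cc^{c}$ where $c$ is the number of cherries; joins absorb the linear factor, $V(J_\mathcal{T})^{\{r\}} = V^{\{r\}} \times \cc^{c}$, and since $V$ is a cone, $\dim V^{\{r\}} \leq r \dim V = r(2n-3-c)$, giving $\dim J_\mathcal{T}^{\{r\}} \leq 2rn - 3r - (r-1)c$ (Lemma~\ref{dimbound}). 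Meanwhile flatness of the Gr\"obner degeneration forces $\dim in_\omega(I_{2,n}^{\{r\}}) = \dim I_{2,n}^{\{r\}} = 2rn - 2r^2 - r$ \cite{Kleepe1980}. When $c > 2r$ the first bound is strictly smaller, so the containment of ideals $in_\omega(I_{2,n}^{\{r\}}) \subseteq J_\mathcal{T}^{\{r\}}$ is strict, and such trees exist for every $r$ (attach a cherry to each leaf of a $(2r+1)$-leaf tree). You actually put your finger on this --- you note that $\mathrm{HF}\bigl(S/J_\mathcal{T}^{\{r\}}\bigr)$ depends on the topology of $\mathcal{T}$ while the Pfaffian side is tree-independent --- but you treated it as a technical difficulty to be overcome by a toric-fiber-product induction, when it is in fact a counterexample-generating mechanism: although $J_\mathcal{T}$ itself has the same Hilbert function as $I_{2,n}$, taking secants does not commute with the degeneration, and the cherry count measures exactly how badly it fails.

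Two secondary corrections. First, you have the elementary containment of \cite{Sturmfels2006} backwards: what comes for free is $in_\omega(I_{2,n}^{\{r\}}) \subseteq \bigl(in_\omega(I_{2,n})\bigr)^{\{r\}}$ (initial of secant inside secant of initial), and the nontrivial ``delightful'' direction is the reverse one --- which is precisely the inclusion that fails for trees with many cherries. Second, your instinct to start with caterpillar trees is sound and matches the paper: Theorem~\ref{caterpillars} proves equality for caterpillars ($r=2$) by a Hilbert-series comparison through the poset-incomparability ideal $J(P)$, essentially the route you sketch. But no induction on internal edges can propagate this to all topologies; the correct statement for $r=2$ is Theorem~\ref{gtisecants}, equality if and only if $\mathcal{T}$ has fewer than five cherries, and its positive direction is proved not by Hilbert-function matching but by showing $in_\omega(I_{2,n}^{\{2\}})$ is prime (Theorem~\ref{primesecants}, via linearly occurring variables, non-zerodivisor coefficients, and elimination) and then establishing the matching lower bound $\dim J_\mathcal{T}^{\{2\}} \geq 4n-10$ by the tropical secant-dimension method of Draisma (Lemma~\ref{jtdraisma}), so that a prime ideal contained in an ideal of the same dimension must equal it.
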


We show that this conjecture is not true for any $r$. In the case where $r=2$, we also prove the following theorem giving necessary and sufficient conditions on the topology of 
$\mathcal{T}$ for the conjecture to hold. 
In the course of doing so, we also furnish a new class of prime initial ideals of the Pfaffian ideals.

\begin{thm}  
\label{gtisecants} Let $\mathcal{T}$ be a binary phylogenetic $[n]$-tree, $\omega \in \mathbb{R}^{n \choose 2}$ a weight vector,
and $\tau \in \{\pm1\}^{n \choose 2}$ a sign vector such that 
$\tau \cdot in_\omega(I_{2,n}) = J_\mathcal{T}$, then 
$\tau \cdot in_\omega(I_{2,n}^{\{2\}}) = J_\mathcal{T}^{\{2\}}$  if and only if $\mathcal{T}$ has fewer than five cherries.
\end{thm}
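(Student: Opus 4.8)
The plan is to reduce the biconditional to a single dimension count for the secant of the tree ideal. First, since $\tau$ is a diagonal change of coordinates, it commutes with both the secant and the initial-ideal operations, so it suffices to compare $(in_\omega I_{2,n})^{\{2\}}$ with $in_\omega(I_{2,n}^{\{2\}})$ and then twist by $\tau$ at the end. The general theory of secants and initial ideals in \cite{Sturmfels2006} gives the containment $(in_\omega I_{2,n})^{\{2\}} \subseteq in_\omega(I_{2,n}^{\{2\}})$ for free; twisting by $\tau$ yields $J_\mathcal{T}^{\{2\}} \subseteq \tau \cdot in_\omega(I_{2,n}^{\{2\}})$. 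Thus the whole theorem becomes the question of when this one containment is an equality. Because passing to an initial ideal is a flat degeneration, the right-hand side has the same Hilbert function, and in particular the same dimension, as $I_{2,n}^{\{2\}}$; since $I_{2,n}^{\{2\}}$ cuts out the (irreducible) variety of $n \times n$ skew-symmetric matrices of rank at most $4$, a standard computation gives $\dim V(\tau \cdot in_\omega(I_{2,n}^{\{2\}})) = \binom{n}{2} - \binom{n-4}{2} - 1 = 4n - 11$.

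Writing $A := J_\mathcal{T}^{\{2\}}$ and $B := \tau \cdot in_\omega(I_{2,n}^{\{2\}})$, I always have $A \subseteq B$, hence $V(B) \subseteq V(A)$ and $\dim V(A) \geq \dim V(B) = 4n - 11$. For the \emph{if} direction I would show that when $\mathcal{T}$ has at most four cherries, (i) $\dim V(A) = 4n - 11$ and (ii) $A$ is prime. Granting these, irreducibility of $V(A)$ together with $V(B) \subseteq V(A)$ and $\dim V(B) = \dim V(A)$ forces $V(B) = V(A)$, hence $\sqrt{B} = \sqrt{A} = A$, and the chain $A \subseteq B \subseteq \sqrt{B} = A$ collapses to $A = B$. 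This simultaneously proves equality and exhibits $A = B$ as a prime initial ideal of the Pfaffian ideal $I_{2,n}^{\{2\}}$, the promised new family. For the \emph{only if} direction it suffices to show that when $\mathcal{T}$ has five or more cherries $\dim V(A) > 4n - 11$: then $A$ and $B$ define varieties of different dimension and cannot be equal. So everything rests on computing $\dim V(J_\mathcal{T}^{\{2\}})$ as a function of the cherry structure of $\mathcal{T}$, together with the primality statement in the low-cherry regime.

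The dimension of the secant of the tree variety is governed by the span of the affine tangent spaces at two generic points, which I would analyze combinatorially in the spirit of the tropical/combinatorial secant techniques underlying \cite{Sturmfels2006}. Since $\dim V(J_\mathcal{T}) = \dim Gr(2,\mathbb{C}^n) = 2n - 4$, the expected secant dimension is $2(2n-4) + 1 = 4n - 7$, so all the content lies in the secant defect; note the containment already forces this defect to be at most $4$. The tangent space to the join of the rank-two locus at two generic points is $U \wedge \mathbb{C}^n$ for a four-dimensional $U$, which is exactly what produces the defect $4$ of $\sigma_2(Gr(2,\mathbb{C}^n))$, and in the tree degeneration the cherries are precisely the local configurations controlling the coincidences among these combinatorial tangent directions. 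The heart of the argument is therefore a careful count of how many independent coincidences the two generic points can exhibit: I expect the defect to remain the full value $4$ exactly as long as there are at most four cherries, and to drop once a fifth cherry forces a genuinely new tangent direction. Pinning down this count, and in particular proving that the fifth cherry always lowers the defect below $4$, is the main obstacle, and it is where the numerical threshold of the theorem is forced. It also explains the shape of the statement: a binary tree needs at least ten leaves to carry five (necessarily leaf-disjoint) cherries, so the conjecture of \cite{Sullivant2008} holds for all trees on fewer than ten leaves and first fails at $n = 10$.

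For the remaining primality claim when $\mathcal{T}$ has at most four cherries, I would argue that $J_\mathcal{T}^{\{2\}}$ is a prime binomial ideal, either by producing an explicit toric parametrization of $V(J_\mathcal{T}^{\{2\}})$ adapted to $\mathcal{T}$, or by transporting primality from the irreducible Pfaffian variety $V(I_{2,n}^{\{2\}})$ across the flat degeneration once the dimension count rules out extra top-dimensional components. With the dimension formula in hand this step should be comparatively routine, so I expect the genuine difficulty to be concentrated in the defect computation of the preceding paragraph; alternatively, for the \emph{only if} direction one could bypass dimensions entirely by exhibiting an explicit twisted initial form of a $6 \times 6$ subpfaffian lying in $B$ but not in $J_\mathcal{T}^{\{2\}}$ whenever five cherries are present, but the dimension route is the cleaner way to obtain the biconditional uniformly.
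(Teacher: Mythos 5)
Your argument inverts the one containment that everything else hangs on. The result of \cite{Sturmfels2006} is that the \emph{initial ideal of the secant} is contained in the \emph{secant of the initial ideal}, i.e.\ $in_\omega(I_{2,n}^{\{2\}}) \subseteq (in_\omega I_{2,n})^{\{2\}} = J_\mathcal{T}^{\{2\}}$ --- not the reverse, as you assert. (If the reverse containment were free, equality would always hold, contradicting the ``only if'' half of the very theorem you are proving.) With the correct direction, $B \subseteq A$ in your notation, so $V(A) \subseteq V(B)$ and $\dim V(A) \leq \dim V(B)$; the ideals fail to coincide when $\dim V(J_\mathcal{T}^{\{2\}})$ drops \emph{below} the Pfaffian dimension, and equality is forced when it does not. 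This reverses your entire logical architecture: the ``only if'' direction needs an \emph{upper} bound $\dim(J_\mathcal{T}^{\{2\}}) \leq 4n-6-c$ ($c$ the number of cherries, affine cone dimensions), showing each cherry \emph{costs} a dimension in the secant because cherry variables split off as a linear factor that does not double under joining; your tangent-space heuristic that a fifth cherry ``lowers the defect'' (raises the dimension) is exactly backwards. The ``if'' direction then needs a matching \emph{lower} bound $\dim(J_\mathcal{T}^{\{2\}}) \geq 4n-10$ for trees with at most four cherries, which is the genuinely hard dimension computation (the paper does it by induction using Draisma's tropical lower bound on join dimensions, plus a separate Gr\"obner degeneration argument for caterpillars). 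Your proposed route of finding an initial form of a $6\times 6$ subpfaffian in $B$ but not in $A$ is likewise impossible, since every element of $B$ lies in $A$.

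The primality burden is also misplaced. Because $B \subseteq A$ with equal dimensions, the ideal that must be proven prime for the ``prime $+$ same dimension $\Rightarrow$ equal'' step is $B = in_\omega(I_{2,n}^{\{2\}})$, an initial ideal of a prime ideal --- and initial ideals of primes are not prime in general, so this is a substantial theorem in its own right (the paper proves it for all $r$ by an elimination-theoretic induction, finding for each variable $p_{j,n+1}$ a polynomial in which it occurs linearly with non-zerodivisor coefficient). By contrast $A = J_\mathcal{T}^{\{2\}}$ is prime for free, being the ideal of the join of an irreducible (toric) variety with itself, so the step you flag as ``comparatively routine'' is trivial while the step the argument actually requires is absent. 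The off-by-one in your dimension count ($4n-11$ projective versus the paper's $4n-10$ for the affine cone) is only a convention issue, and your observation that five cherries first occur at $n=10$ is correct, but neither repairs the reversed containment.
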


That Conjecture 1.2 holds in any instance is perhaps
somewhat unexpected as it was shown in \cite{Sturmfels2006} that the operations of taking initial ideals and taking secant ideals do not in general commute even when the initial ideals are monomial. 
This has possible implications for phylogenetics as
there is a close similarity between  Conjecture \ref{sethsconjecture} and 
\cite[Conjecture 4.1.1]{Long2016}.  
The latter concerns initial ideals of secant ideals associated to binary leaf-labeled trees under the Cavender-Farris-Neyman (CFN) model. There is also a close relationship between the ideals involved as $J_\mathcal{T}$ can be viewed as the intersection of the ideal for the CFN model on the tree $\mathcal{T}$ with a coordinate subring.

The rest of this paper is devoted to proving Theorem \ref{gtisecants} and investigating possible extensions. Section \ref{GTI} establishes the necessary background and notation for
the Pl\"ucker tree ideals and the Pfaffian ideals. 
We conclude the section with a few results about initial ideals of Pfaffian ideals and outline the technique that we will use to prove Theorem 1.3. We show that to prove the theorem we need to show that a certain class of initial ideals of the Pfaffian ideals are prime and to construct lower bounds on the dimension of the secants of the Pl\"ucker tree ideals. 
Sections \ref{primeinitial} and \ref{pluckerdimensions} establish the primeness and dimension results respectively and enable us to give a short proof of the main theorem. Finally, in Section \ref{Beyond the Second Secant} we examine some of the possible extensions of Conjecture \ref{sethsconjecture} for higher order secant varieties.

\section{Pl\"ucker Tree Ideals}
\label{GTI}

A \emph{binary tree} is a connected acyclic graph in which every vertex is either degree one or three. We call a degree one vertex of a binary tree $\mathcal{T}$ a \emph{leaf}. 
If the leaves of $\mathcal{T}$ are labeled by a label set $X$ then $\mathcal{T}$ is a \emph{binary phylogenetic $X$-tree}. Most often in this paper we will consider binary phylogenetic $[n]$-trees where $[n] := \{1, \ldots, n\}.$ Our terminology and notation for trees will follow the conventions from phylogenetics found in \cite{Semple2003} and we refer the reader there for more details. 

For what follows it will be useful to have a standard planar embedding of our trees. 
If $\mathcal{T}$ is a binary phylogenetic $[n]$-tree
then inscribe a regular $n$-gon on the unit circle in 
$\mathbb{R}^2$ and choose a planar representation of 
$\mathcal{T}$ so that the leaves are located at the vertices of the $n$-gon. 
Label the leaves of $\mathcal{T}$ in increasing order clockwise around the circle. 
The induced 4-leaf subtrees of a tree are called quartets and a tree is uniquely determined by its quartets \cite{Semple2003}.
With a circular embedding of $\mathcal{T}$ as described, every induced quartet on the leaves $1 \leq i < j < k < l \leq n$ is either $ij|kl$ or $il|jk$. The notation $ij|kl$ indicates that the induced quartet is the 4-leaf tree with one non-leaf edge whose removal disconnects the leaves labeled by $i$ and $j$ from those labeled by $k$ and $l$.

For trees with such a circular embedding the vector $\tau$ in Conjecture \ref{sethsconjecture} and
Theorem \ref{gtisecants}  is equal to the all ones vector. 
Thus, for the rest of this chapter we will consider only trees embedded in this manner so that we can ignore the sign vector entirely. The tree pictured in Example \ref{tree labeling} is 
a binary phylogenetic $[15]$-tree .

 Let  $Z^n= \mathbb{C}[p_{ij} : 1 \leq i < j \leq n]$ and 
 $$I_{2,n} = \langle p_{ij}p_{kl} - p_{ik}p_{jl} + p_{il}p_{jk} : 1 \leq i < j < k < l \leq n \rangle\subseteq Z^n$$
  be the the ideal of quadratic Pl\"ucker relations. Let $\mathcal{T}$ be a binary phylogenetic $[n]$-tree and assign positive lengths to the edges of $\mathcal{T}$. The choice of edge lengths naturally induces a metric $d$ on the leaves of $\mathcal{T}$ where $d(i ,j)$ is the length of the unique path between $i$ and $j$. Let $\omega \in \mathbb{R}^{n \choose 2}$ be the vector with $\omega_{ij} = d(i,j)$ for $i <j$. 
Then the initial ideal with respect to this weight vector is 
$$in_\omega(I_{2,n}) = \langle p_{ik}p_{jl} - p_{il}p_{jk}: ij|kl \text{ is a quartet of } \mathcal{T} \rangle$$ \cite[Corollary 4.4]{Speyer2004}. We call $J_\mathcal{T} = in_\omega(I_{2,n})$ the \emph{Pl\"ucker tree ideal of $\mathcal{T}$}. Note that any choice of positive edge lengths for $\mathcal{T}$ yields the same initial ideal.

Corollary 4.4 from \cite{Speyer2004} also gives us a way to realize $J_\mathcal{T}$ as the kernel  of a homomorphism.
 Let 
 $\mathbb{C}[y] = \mathbb{C}[y_e : e \text{ is an edge of } \mathcal{T}]$ and 
 $\phi_\mathcal{T}: Z^n \rightarrow \mathbb{C}[y]$ 
 be the homomorphism that sends $p_{ij}$ to the product of all of the parameters $y_e$ corresponding to edges on the unique path from $i$ to $j$. Then $J_\mathcal{T}$ is the toric ideal $\text{ker}(\phi_\mathcal{T})$.



\subsection{Initial Ideals of Pfaffian Ideals}
\label{Initial Ideals of Pfaffian Ideals}

The determinant of a generic $2r \times 2r$ skew-symmetric matrix is the square of  a polynomial called the Pfaffian of the matrix.
Let $P(n,r)$ be the ideal generated by the $2r\times2r$ subpfaffians of a generic $n \times n$ skew-symmetric matrix $P = (p_{ij})$. 
Each $2r\times2r$ Pfaffian equation corresponds to a $2r$-element set $K \subseteq [n] := \{1, \ldots, n\}$. 
The terms appearing in each Pfaffian are then in bijection with
perfect matchings on the set $K$.
  The Pfaffian ideal $P(n,r)$ is the $(r-1)$-secant of the  Pl\"ucker ideal, that is $P(n,r) = I_{2,n}^{\{r-1\}}$.
This result, as well as background and examples for the Pfaffian ideals, can be found in \cite{Pachter2005}.
In this section, we will collect a number of facts about the Pfaffian ideals which will be useful for proving the results that follow.

\begin{defn} 
Let $p$ be the $2r \times 2r$ Pfaffian equation corresponding to perfect matchings on the set $\{i_1, \ldots, i_{2r}\}$ with $i_1< \ldots < i_{2r}$. The \emph{crossing monomial} of $p$ is the monomial $p_{i_1,i_{r+1}}p_{i_2,i_{r+2}}\ldots p_{i_r,i_{2r}}.$
\end{defn}

\begin{thm} \cite[Theorem 2.1]{Jonsson2007}
\label {pfaffiangb}
There exists a term order $\prec_{circ}$ on $Z^n$ that selects the crossing monomial as the lead term of the Pfaffian equations. Furthermore, the $2r \times 2r $ Pfaffians form a Gr\"obner basis for $I_{2,n}^{\{r - 1\}}$ with respect to this term order and   $$in_{\prec_{circ}}(I_{2,n}^{\{r -1 \}})=
  \langle p_{i_1,i_{r+1}}p_{i_2,i_{r+2}} \ldots p_{i_{r},i_{2r}} :  1 \leq i_1 < i_2 < \ldots < i_{2r} \leq n \rangle .$$ 
  
\end{thm}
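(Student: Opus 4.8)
The plan is to prove the two assertions in turn: first that the circular term order $\prec_{circ}$ selects the crossing monomial of each Pfaffian equation, and then that the Pfaffians form a Gr\"obner basis, which is equivalent to showing that the crossing monomials generate the \emph{entire} initial ideal $in_{\prec_{circ}}(I_{2,n}^{\{r-1\}})$.

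For the first part, I would build $\prec_{circ}$ from a weight vector that is a strictly concave increasing function of the circular gap between indices, say $w_{ij} = g(j-i)$ with $g$ strictly concave and increasing, refined by an arbitrary tie-breaking monomial order. Fix a $2r$-set $K = \{i_1 < \cdots < i_{2r}\}$. Each term of the Pfaffian corresponding to $K$ is a signed squarefree monomial whose chords form a perfect matching of $K$, and the crossing monomial $p_{i_1 i_{r+1}}\cdots p_{i_r i_{2r}}$ is the term of the unique \emph{totally crossing} matching, in which all $r$ chords pairwise cross. The key point is a four-point exchange argument: if a matching contains two chords that do not cross (nested or side by side), replacing them by the crossing pair on the same four endpoints strictly increases the total weight. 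This reduces to comparing, for $a<b<c<d$, the gap multisets $\{c-a,\,d-b\}$ (crossing), $\{d-a,\,c-b\}$ (nested), and $\{b-a,\,d-c\}$ (side). Since $\{c-a,d-b\}$ dominates $\{b-a,d-c\}$ entrywise and is majorized by $\{d-a,c-b\}$, the monotonicity and strict concavity of $g$ make the crossing configuration the strict winner in both comparisons. Iterating the exchange shows the totally crossing matching is the unique weight maximizer, so its monomial survives as the leading term with no cancellation; hence the crossing monomial equals $in_{\prec_{circ}}$ of the Pfaffian.

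For the Gr\"obner basis claim, let $M$ be the ideal generated by all crossing monomials. By the first part, $M \subseteq in_{\prec_{circ}}(I_{2,n}^{\{r-1\}})$, and it suffices to prove equality. Since $M$ is squarefree, $Z^n/M$ is the Stanley--Reisner ring of the complex $\Delta_{n,r}$ whose faces are the sets of chords of the $n$-gon containing no $r$ pairwise crossing chords; these are exactly the $(r-1)$-crossing-free diagrams, i.e. the generalized triangulations of the $n$-gon. I would invoke the combinatorics of such diagrams: they are pure of a known common cardinality and the complex is shellable, so one reads off both its Krull dimension and its multiplicity (the number of facets). The target is to match these against the invariants of the Pfaffian variety, the locus of $n\times n$ skew-symmetric matrices of rank $\le 2r-2$, which is irreducible and Cohen--Macaulay of codimension $\binom{n-2r+2}{2}$ with classically known degree. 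Matching the full Hilbert series of $Z^n/M$ with that of $Z^n/I_{2,n}^{\{r-1\}}$ (equal to that of its initial ideal) upgrades the inclusion $M \subseteq in_{\prec_{circ}}(I_{2,n}^{\{r-1\}})$ to an equality, because two nested homogeneous ideals with the same Hilbert function coincide. This yields both $in_{\prec_{circ}}(I_{2,n}^{\{r-1\}}) = M$ and the Gr\"obner basis statement.

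The main obstacle is the combinatorial input in the second part: establishing that $\Delta_{n,r}$ is pure and enumerating its facets and shelling $h$-vector so as to recover the dimension, degree, and ultimately Hilbert series of the Pfaffian variety. This is precisely the theory of generalized triangulations, and it is where all the real content lies; the term-order construction and the reduction of equality to a Hilbert function comparison are comparatively routine. An alternative to the Hilbert series matching would be a direct Buchberger argument reducing every $S$-polynomial of two Pfaffians to zero via the Pfaffian straightening relations, but this appears to require at least as much bookkeeping as the combinatorial route.
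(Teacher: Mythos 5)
The paper does not prove this statement at all: it is quoted as Theorem~2.1 of the cited reference (Jonsson--Welker), so there is no internal argument to compare yours against. Judged on its own, your first part is complete and correct. A weight $w_{ij}=g(j-i)$ with $g$ strictly increasing and strictly concave, refined by an arbitrary tie-breaking order, does make the totally crossing matching the unique weight maximizer: for $a<b<c<d$ the crossing gap pair $\{c-a,d-b\}$ beats the sequential pair $\{b-a,d-c\}$ entrywise (monotonicity) and beats the nested pair $\{d-a,c-b\}$ because the two have equal gap sums and the crossing pair is strictly more balanced (concavity); since every matching that is not totally crossing contains a non-crossing pair of chords, the exchange argument terminates at the unique totally crossing matching.

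Your second part identifies the right reduction, but, as you concede, all of the substance is outsourced to the theory of generalized triangulations: purity and shellability of the complex of chord sets with no $r$ pairwise crossing chords, and the enumeration of its facets against the classical degree of the rank-$(2r-2)$ skew-symmetric locus. Those are precisely the main theorems of the reference the paper is citing (together with earlier purity results of Nakamigawa and of Dress--Koolen--Moulton), so what you have is a proof modulo that paper rather than an independent one. One genuine simplification is available: you do not need to match the full Hilbert series. With $M\subseteq in_{\prec_{circ}}(I_{2,n}^{\{r-1\}})$, equal Krull dimension, and equal degree, the quotient $in_{\prec_{circ}}(I_{2,n}^{\{r-1\}})/M$ has Hilbert polynomial of strictly smaller degree, hence strictly smaller dimension; if it were nonzero, its associated primes would be low-dimensional associated primes of $Z^n/M$, which is impossible because shellability makes $Z^n/M$ Cohen--Macaulay, hence unmixed. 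So dimension, degree, and shellability already force $M= in_{\prec_{circ}}(I_{2,n}^{\{r-1\}})$, and the only enumerative input needed is the facet count versus the degree of the Pfaffian variety, not the entire $h$-vector.
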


We also have the following corollary.


\begin{cor} 
\label{gbforinitial}
Let $\mathcal{T}$ be a binary phylogenetic $[n]$-tree  and $\omega$ a term order for $Z^n$ derived from $\mathcal{T}$ as above. Then the initial forms of the $2r \times 2r$ Pfaffians with respect to $\omega$ form a Gr\"obner basis for $in_\omega(I_{2,n}^{\{r-1\}})$ with respect to $\prec_{circ}$ and hence generate $in_\omega(I_{2,n}^{\{r-1\}})$.
\end{cor}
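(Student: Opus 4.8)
The plan is to deduce the corollary from Theorem~\ref{pfaffiangb} by transporting the Gr\"obner basis property of the $2r \times 2r$ Pfaffians from $\prec_{circ}$ to a refinement of the weight order, and then applying the classical relationship between an initial ideal and such a refinement. Concretely, form the term order $\prec_\omega$ on $Z^n$ defined by $p^{\mathbf{a}} \prec_\omega p^{\mathbf{b}}$ whenever $\omega \cdot \mathbf{a} < \omega \cdot \mathbf{b}$, or $\omega \cdot \mathbf{a} = \omega \cdot \mathbf{b}$ and $p^{\mathbf{a}} \prec_{circ} p^{\mathbf{b}}$; this is a legitimate term order since the tree metric makes $\omega$ strictly positive. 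The standard fact I will invoke is that if a set $G$ is a Gr\"obner basis of an ideal $I$ with respect to $\prec_\omega$, then $\{in_\omega(g) : g \in G\}$ is a Gr\"obner basis of $in_\omega(I)$ with respect to $\prec_{circ}$, which follows from the identity $in_{\prec_{circ}}(in_\omega(I)) = in_{\prec_\omega}(I)$ together with $in_{\prec_\omega}(g) = in_{\prec_{circ}}(in_\omega(g))$. Thus it suffices to show the Pfaffians are a Gr\"obner basis with respect to $\prec_\omega$.

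The geometric heart of the argument is the following claim, which I would establish first: for the weight vector $\omega$ with $\omega_{ij} = d(i,j)$ and any set $\{i_1 < \cdots < i_{2r}\}$, the crossing matching $\{i_1 i_{r+1}, \ldots, i_r i_{2r}\}$ has maximal total weight $\sum_k \omega_{i_k i_{k+r}}$ among all perfect matchings of the set. Since the terms of the Pfaffian on $\{i_1, \ldots, i_{2r}\}$ are indexed by these matchings and the $\omega$-weight of a term is the sum of the $\omega_{ab}$ over its matched pairs, the claim says exactly that the crossing monomial is $\omega$-maximal among the terms of the Pfaffian.

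For $r = 2$ the claim is precisely the four-point condition for tree metrics: for a quartet $ij|kl$ one has $d(i,k) + d(j,l) = d(i,l) + d(j,k) \geq d(i,j) + d(k,l)$, so the two crossing terms dominate, which is also the reason $in_\omega$ of the Pl\"ucker relation is the binomial recorded above. For general $r$ the claim follows from the four-point condition (equivalently, the Kalmanson inequalities satisfied by a circularly embedded tree metric) by an exchange argument: any matching that contains two chords which do not cross in the circular order can have that pair of edges re-matched into the crossing configuration without decreasing the total weight, and iterating carries an arbitrary matching to the fully crossing one. I expect this claim to be the main obstacle, since it is where the special structure of the tree metric enters; the $r = 2$ case is immediate, but the general case requires the exchange argument and a short termination argument.

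Granting the claim, the crossing monomial of each Pfaffian $p$ is $\omega$-maximal among its terms, and since $\prec_{circ}$ selects it among all terms of $p$ it is a fortiori the $\prec_{circ}$-largest among the $\omega$-maximal terms; hence $in_{\prec_\omega}(p)$ is the crossing monomial, so $in_{\prec_\omega}(p) = in_{\prec_{circ}}(p)$ for every Pfaffian. By Theorem~\ref{pfaffiangb} these crossing monomials generate $in_{\prec_{circ}}(I_{2,n}^{\{r-1\}})$, whence $\langle in_{\prec_\omega}(p) \rangle = in_{\prec_{circ}}(I_{2,n}^{\{r-1\}}) \subseteq in_{\prec_\omega}(I_{2,n}^{\{r-1\}})$. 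Because $I_{2,n}^{\{r-1\}}$ is homogeneous, both $in_{\prec_{circ}}$ and $in_{\prec_\omega}$ of it share the Hilbert function of $I_{2,n}^{\{r-1\}}$, so this containment of monomial ideals is forced to be an equality. Therefore the Pfaffians are a Gr\"obner basis with respect to $\prec_\omega$, and the transfer result quoted in the first paragraph yields that the initial forms $in_\omega(p)$ form a Gr\"obner basis of $in_\omega(I_{2,n}^{\{r-1\}})$ with respect to $\prec_{circ}$, as claimed.
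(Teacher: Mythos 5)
Your proposal is correct and takes essentially the same route as the paper: both arguments rest on the four-point condition for a circularly embedded tree metric making the crossing monomial $\omega$-maximal among the terms of each Pfaffian, so that $\prec_{circ}$ refines $\omega$ on $I_{2,n}^{\{r-1\}}$ and the transfer result \cite[Corollary 1.9]{sturmfels1996grobner} applies. The exchange/termination step you flag as the main obstacle is left equally implicit in the paper (which simply asserts that $in_\omega(p)$ contains the crossing term), and your Hilbert-function argument merely makes explicit what the paper compresses into the claim that $\prec_{circ}$ refines $\omega$.
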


\begin{proof} Since all of our trees are circularly embedded, for $1 \leq i < j < k < l \leq n,$  the $\omega$-weight of $p_{ik}p_{jl}$, $\omega(p_{ik}p_{jl})$, is greater than or equal to that of both $p_{ij}p_{kl}$ and $p_{il}p_{jk}$. 
Therefore, if we let $p$ be the $2r \times 2r$ Pfaffian equation with monomials corresponding to perfect matchings of the set
$\{i_1, \ldots, i_{2r}\}$ with 
$1 \leq i_1 < i_2 < \ldots < i_{2r} \leq n$, then $in_\omega(p)$ contains the term $p_{i_1,i_{r+2}}p_{i_2,i_{r+3}} \ldots p_{i_{r+1},i_{2r}}$. Thus, the term order $\prec_{circ}$ refines the weight vector $\omega$ \cite{Speyer2004}. The result follows from \cite[Corollary 1.9]{sturmfels1996grobner}. \end{proof}

\subsection{Outline of the Proof of Theorem \ref{gtisecants}}
\label{outline of proof}

It was established in \cite{Sturmfels2006} that 
for any term order, the initial ideal of a secant ideal is contained inside the secant of the initial ideal. Therefore, if $\mathcal{T}$ is a binary phylogenetic $[n]$-tree and $\omega$ is constructed 
from $\mathcal{T}$ as above we have the inclusion
\begin{equation*}
in_{\omega}(I_{2,n}^{\{2\}})\subseteq (in_{\omega}(I_{2,n}))^{\{2\}} = J_\mathcal{\mathcal{T}}^{\{2\}}.
\end{equation*}

If a prime ideal is contained in another ideal of the same dimension then the two ideals must be equal. Therefore, we can prove equality above if we can show that the ideal $in_{\omega}(I_{2,n}^{\{2\}})$ is prime and that $\dim(J_\mathcal{\mathcal{T}}^{\{2\}}) = \dim(in_{\omega}(I_{2,n}^{\{2\}}))$. Because of the containment, it will actually suffice to show that
$\dim(J_\mathcal{\mathcal{T}}^{\{2\}}) \geq 
\dim(in_{\omega}(I_{2,n}^{\{2\}}))$. This will be our approach for proving Theorem \ref{gtisecants}. In Section \ref{primeinitial}, we address the issue of primeness by using elimination theory and induction. In Section \ref{pluckerdimensions}, we use the tropical secant dimension approach of \cite{Draisma2008} to establish the dimension results.

\section{Prime Initial Ideals of the Pfaffian Ideals}
\label{primeinitial}

The first part of our proof of Theorem \ref{gtisecants} requires showing that for a weight vector constructed from a circularly embedded binary phylogenetic $[n]$-tree, 
$ in_{\omega}(I_{2,n}^{\{2\}})$ is prime. 
In fact, we obtain the much stronger result below giving an entire class of prime initial ideals for the Pfaffian ideals.

\begin{thm}
\label{primesecants}
 Let $\omega$ be a weight vector constructed from a circular embedding of a binary phylogenetic $[n]$-tree  $\mathcal{T}$. Then for all $r,n \in \mathbb{N}$, $in_\omega(I_{2,n}^{\{r\}})$ is a prime ideal.
\end{thm}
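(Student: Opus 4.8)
The plan is to induct on the number of leaves $n$, proving that $in_\omega(I_{2,n}^{\{r\}})$ is prime for all $r$ simultaneously at each stage. For the base of the induction I would dispose of two easy regimes. When $n < 2(r+1)$ there are no $2(r+1)\times 2(r+1)$ Pfaffians, so $I_{2,n}^{\{r\}} = (0)$ and its initial ideal is the prime ideal $(0)$; this covers all small $n$ and starts the induction. As a reassuring anchor, the case $r=1$ is directly prime for every $n$, since $in_\omega(I_{2,n}) = J_\mathcal{T} = \ker(\phi_\mathcal{T})$ is the kernel of a homomorphism into the domain $\mathbb{C}[y]$ and hence is prime.

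For the inductive step I would delete a leaf. Let $\mathcal{T}'$ be the binary phylogenetic $[n-1]$-tree obtained from $\mathcal{T}$ by deleting leaf $n$ and suppressing the resulting degree-two vertex; it is again circularly embedded, and its induced weight vector $\omega'$ produces the ideal $in_{\omega'}(I_{2,n-1}^{\{r\}})$, which is prime by the inductive hypothesis. The first technical claim is that eliminating the variables $p_{1n}, \ldots, p_{n-1,n}$ incident to leaf $n$ recovers exactly this smaller ideal, that is, $in_\omega(I_{2,n}^{\{r\}}) \cap Z^{n-1} = in_{\omega'}(I_{2,n-1}^{\{r\}})$. Here I would lean on Corollary \ref{gbforinitial}: the initial Pfaffians form a Gr\"obner basis with respect to $\prec_{circ}$, and once $\prec_{circ}$ is arranged (or checked) to eliminate the incident variables, the elimination ideal is generated by those Gr\"obner basis elements involving only the remaining variables, which are precisely the initial Pfaffians supported on subsets of $[n-1]$.

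The heart of the argument is then to ascend from the primeness of the elimination ideal to the primeness of $in_\omega(I_{2,n}^{\{r\}})$ itself by adjoining the incident variables one at a time. Writing $D$ for the domain $Z^{n-1}/in_{\omega'}(I_{2,n-1}^{\{r\}})$, I would show that each incident variable is controlled over $D$: grouping the maximum-weight matchings of a set $K \ni n$ according to the partner of $n$ expresses a suitable initial Pfaffian in the form $p_{in}\,h - f$, with $h,f$ not involving $p_{in}$ and $h$ a nonzerodivisor modulo the ideal built so far. Solving for $p_{in}$ in the localization then exhibits the next quotient as a subring of a localization of $D$ (adjoined with any genuinely free incident variables), which is again a domain; iterating over all incident variables yields that $Z^n/in_\omega(I_{2,n}^{\{r\}})$ is a domain. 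Equivalently, one verifies the saturation identity $in_\omega(I_{2,n}^{\{r\}}) = \big(in_{\omega'}(I_{2,n-1}^{\{r\}}) + (\text{incident relations})\big) : h^\infty$, so that no extraneous components are introduced.

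The step I expect to be the main obstacle is this ascent: identifying, for each incident variable, a single initial Pfaffian that solves for it with a nonzerodivisor coefficient $h$, and proving that $h$ is indeed a nonzerodivisor modulo the partially built ideal. This requires a careful combinatorial analysis of which maximum-weight matchings survive in $in_\omega(p_K)$ when $n \in K$, together with a verification that the remaining generators impose no further relation on $p_{in}$ beyond the one used to solve for it, which is exactly the statement that the elimination computation of the previous paragraph leaves nothing behind. By contrast, the two earlier steps are largely formal consequences of the Gr\"obner basis in Corollary \ref{gbforinitial} and of recognizing the base cases as toric or trivial.
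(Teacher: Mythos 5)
Your skeleton matches the paper's: induct on the number of leaves, identify the elimination ideal with respect to the leaf-incident variables with the ideal of the smaller tree, and climb back up using initial Pfaffians of the form $g\cdot p_{in}+h$ with $g$ a nonzerodivisor (this is exactly the Garcia--Stillman--Sturmfels elimination lemma the paper invokes). But there is a genuine gap at the point you flag as the ``heart,'' and it is not merely a matter of combinatorial bookkeeping: it is \emph{false} that every incident variable can be solved for birationally. The variety of $in_\omega(I_{2,n}^{\{r\}})$ lives inside rank-$\le 2r$ skew-symmetric matrices, and the last row/column of such a matrix carries $2r$ genuinely free parameters; correspondingly, the paper can only produce a Pfaffian in which $p_{j,n+1}$ occurs linearly for the $n-2r$ variables with $2r<j\le n$ (Lemma \ref{pjnappears}, which itself requires a carefully chosen circular relabeling, split into two cases according to whether $\mathcal{T}$ has a split of size $r+1$). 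For the remaining $2r$ incident variables no such relation exists, and your parenthetical ``adjoined with any genuinely free incident variables'' conceals the real work: one must \emph{prove} they are free modulo the partially eliminated ideal. The paper does this with a dimension count that your proposal has no substitute for: since $I_{2,n+1}^{\{r\}}$ is prime, every component of its initial ideal has dimension $2r(n+1)-2r^2-r$ (Kalkbrener--Sturmfels), the birational projections of the elimination lemma preserve component dimensions, and comparing with $\dim(in_{\omega'}(I_{2,n}^{\{r\}}))=2rn-2r^2-r$ forces the last $2r$ variables to be free in every component. Without this (or an equivalent) argument, your saturation identity is unverified and the ascent stalls.

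A second, smaller gap is your proposed proof that $in_\omega(I_{2,n}^{\{r\}})\cap Z^{n-1}=in_{\omega'}(I_{2,n-1}^{\{r\}})$. Reading off the elimination ideal from the Gr\"obner basis of Corollary \ref{gbforinitial} requires $\prec_{circ}$ to be an elimination order for the variables incident to the deleted leaf, which it is not (it is a fixed circular order selecting crossing monomials, and relabeling to make it an elimination order would change which monomials are crossing). The paper instead proves this identity by introducing the grading $\deg(p_{ij})=1$ if $j=n+1$ and $0$ otherwise, observing that $I_{2,n+1}^{\{r\}}$ is homogeneous for it, and using that taking initial ideals commutes with decomposition into graded pieces; you should replace your Gr\"obner-basis argument with something of this kind. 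Your base cases and the toric observation for $r=1$ are fine, and your identification of the nonzerodivisor verification as delicate is accurate --- the paper's proof shows $in_{\prec_{circ}}(g)$ is coprime to every crossing monomial because its index gaps are at most $r$ while crossing monomials of $(2r+2)\times(2r+2)$ Pfaffians have gaps at least $r+1$ --- but as written the proposal is missing the decisive dimension-theoretic step.
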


We will prove Theorem \ref{primesecants}
by utilizing induction and Lemma \ref{primelemma}.

\begin{lemma}
\label{primelemma}
 \cite[Proposition 23]{Garcia2005} Let $\mathbb{K}$ be a field and  $J \subseteq \mathbb{K}$ 
 be an ideal containing a polynomial $f = gx_1 + h$ with $g,h$ not involving $x_1$ and $g$ not a zero divisor modulo $J$.
  Let $J_1= J \cap \mathbb{K}[x_2, \ldots, x_n]$ be the elimination ideal. Then $J$ is prime if and only if $J_1$ is prime.
\end{lemma}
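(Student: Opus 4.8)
The plan is to phrase everything in terms of quotient rings. Set $R=\mathbb{K}[x_1,\dots,x_n]$ and $S=\mathbb{K}[x_2,\dots,x_n]$, so that $J\subseteq R$ and $J_1=J\cap S\subseteq S$, and recall that an ideal is prime precisely when its quotient ring is an integral domain. Thus the lemma is equivalent to the assertion that $R/J$ is a domain if and only if $S/J_1$ is a domain. The composite $S\hookrightarrow R\twoheadrightarrow R/J$ has kernel $J\cap S=J_1$, so it induces an injection $\iota\colon S/J_1\hookrightarrow R/J$. This already disposes of one direction, and without even using the non-zerodivisor hypothesis: if $J$ is prime then $R/J$ is a domain and $S/J_1$ is a subring of it, hence also a domain, so $J_1$ is prime (properness being immediate since $1\notin J\supseteq J_1$).

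For the converse --- the direction in which the hypothesis on $g$ is essential --- the idea is that modulo $J$ the relation $f=gx_1+h=0$ determines $x_1$ as soon as $g$ is inverted. First I would observe that $g\notin J$ (a non-zerodivisor of the nonzero ring $R/J$ is in particular nonzero), so $\bar g\neq 0$ in both $R/J$ and $S/J_1$. Next I would pass to the localization at $\bar g$. Here the non-zerodivisor hypothesis enters decisively: because $\bar g$ is a non-zerodivisor in $R/J$, the localization map $R/J\to (R/J)_{\bar g}$ is injective. In $(R/J)_{\bar g}\cong R_g/J_g$ the equation $\bar g\,\bar x_1=-\bar h$ can be solved as $\bar x_1=-\bar h/\bar g$, so all of $\bar x_1,\dots,\bar x_n$ lie in the image of $(S/J_1)_{\bar g}$; hence the localized map $(S/J_1)_{\bar g}\to (R/J)_{\bar g}$ is surjective. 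Since localization is exact, this map (being the localization of the injection $\iota$) is also injective, so it is an isomorphism $(S/J_1)_{\bar g}\cong (R/J)_{\bar g}$.

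Composing the injection $R/J\hookrightarrow (R/J)_{\bar g}$ with this isomorphism embeds $R/J$ into $(S/J_1)_{\bar g}$. Now if $J_1$ is prime then $S/J_1$ is a domain, and since $\bar g\neq 0$ its localization $(S/J_1)_{\bar g}$ is a subring of the fraction field $\mathrm{Frac}(S/J_1)$, hence again a domain. Therefore $R/J$ embeds into a domain and is itself a domain; properness of $J$ follows from $1\notin J_1$, which forces $1\notin J$. This shows $J$ is prime and completes the biconditional.

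The step I expect to require the most care is the localization bookkeeping: confirming that the non-zerodivisor condition is exactly what makes $R/J\to(R/J)_{\bar g}$ injective (so that nothing collapses upon inverting $\bar g$), and that localization commutes with the quotient and with the inclusion $\iota$, so that the induced map on $\bar g$-localizations is both well defined and injective. Once the isomorphism of $\bar g$-localizations is in hand, both implications of the lemma follow by purely formal manipulations with domains and subrings.
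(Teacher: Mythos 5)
Your proposal is correct, and the paper contains no proof of its own for this lemma --- it is quoted directly from \cite[Proposition 23]{Garcia2005}, whose argument runs along exactly the lines you give: the non-zerodivisor hypothesis makes $R/J \hookrightarrow (R/J)_{\bar g}$ injective, inverting $\bar g$ solves $\bar x_1 = -\bar h/\bar g$ so that $(S/J_1)_{\bar g} \cong (R/J)_{\bar g}$ (the ``birational projection'' the paper later invokes in the proof of Theorem \ref{primesecants}). Your localization bookkeeping is sound throughout, including the properness checks via $1 \in S$; note only that the statement's ``$J \subseteq \mathbb{K}$'' is a typo for $J \subseteq \mathbb{K}[x_1,\ldots,x_n]$, which you silently and correctly repaired.
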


Before we begin, will first need to prove
the following two lemmas that ensure the existence of
polynomials in $in_\omega(I_{2,n}^{\{r\}})$
satisfying the conditions 
of Lemma \ref{primelemma}.


\begin{lemma}
\label{pjnappears}

Let $\mathcal{T}$ be a binary phylogenetic $[n + 1]$-tree  and $\omega$ a weight vector constructed from $\mathcal{T}$.
If $n \geq 2r + 1$, then for $2r < j \leq n$,
there exists a polynomial in $in_\omega(I_{2,n+1}^{\{r\}})$ in which
  $p_{j,n+1} $ occurs linearly.
\end{lemma}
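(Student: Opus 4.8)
The plan is to lean on Corollary \ref{gbforinitial}, which says that $in_\omega(I_{2,n+1}^{\{r\}})$ is generated by the $\omega$-initial forms of the $2(r+1)\times 2(r+1)$ Pfaffians. Since every monomial occurring in a Pfaffian is squarefree in the variables $p_{ij}$, the variable $p_{j,n+1}$ automatically occurs to degree at most one in any such initial form; so it is enough to exhibit a single $(2r+2)$-element set $K\subseteq[n+1]$ with $j,n+1\in K$ for which $p_{j,n+1}$ divides some monomial of $in_\omega(p_K)$, and then take $f=in_\omega(p_K)$.

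The main construction is to make $(j,n+1)$ the distinguished crossing pair of $K$. I would choose $K=A\cup\{j\}\cup B\cup\{n+1\}$ with $A\subseteq\{1,\dots,j-1\}$ and $B\subseteq\{j+1,\dots,n\}$ each of size exactly $r$; writing $K=\{i_1<\cdots<i_{2r+2}\}$ this places $j=i_{r+1}$ and $n+1=i_{2r+2}$, so the crossing monomial $p_{i_1,i_{r+2}}\cdots p_{i_{r+1},i_{2r+2}}$ of $p_K$ has $p_{j,n+1}=p_{i_{r+1},i_{2r+2}}$ as a factor. By Theorem \ref{pfaffiangb} the crossing monomial is the $\prec_{circ}$-leading term of $p_K$, and since $\prec_{circ}$ refines $\omega$ (as shown in the proof of Corollary \ref{gbforinitial}) it is also the $\prec_{circ}$-leading term of $in_\omega(p_K)$, hence a genuine term of it. Thus $f=in_\omega(p_K)$ lies in $in_\omega(I_{2,n+1}^{\{r\}})$ and contains $p_{j,n+1}$ linearly, proving the lemma whenever there are at least $r$ labels below $j$ and at least $r$ labels strictly between $j$ and $n+1$. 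The hypothesis $2r<j$ guarantees the first condition, and the second holds exactly when $n-j\ge r$.

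The step I expect to be the genuine obstacle is the range in which $j$ is close to $n$, so that fewer than $r$ labels lie strictly between $j$ and $n+1$ and the balanced set $B$ cannot be formed. The difficulty is real rather than cosmetic: in the circular embedding the chord $(j,n+1)$ then subtends a short arc, and by the four-point condition such a chord can lie in \emph{no} maximum-$\omega$-weight matching. The extreme case is a tree in which $\{n,n+1\}$ is a cherry and $j=n$, where $(n,n+1)$ is the strictly shortest pair in every induced quartet, so $p_{n,n+1}$ appears in no Pfaffian initial form at all and no single generator can witness the lemma. For the bare statement that $p_{j,n+1}$ occurs linearly one can still finish by taking $f=p_{j,n+1}\cdot G$ for a generator $G$ whose support avoids $p_{j,n+1}$ (such a $G$ exists because $n\ge 2r+1$ forces the ideal to be nonzero, and if the only relevant generator already involves $p_{j,n+1}$ it serves as $f$ directly). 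This settles the remaining $j$ at the cost that the coefficient of $p_{j,n+1}$ now lies in the ideal; producing a coefficient that is moreover a non-zero-divisor modulo $in_\omega(I_{2,n+1}^{\{r\}})$, as needed to invoke Lemma \ref{primelemma}, is precisely the job of the companion lemma and is where the real work lies.
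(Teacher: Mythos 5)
Your main construction is correct as far as it goes, and on its range it is cleaner than the paper's argument: choosing $K=A\cup\{j\}\cup B\cup\{n+1\}$ with $\#A=\#B=r$ and $A<j<B$ does force $p_{j,n+1}=p_{i_{r+1},i_{2r+2}}$ into the crossing monomial, which is a term of $in_\omega(p_K)$ because $\prec_{circ}$ refines $\omega$. But this only covers $2r<j\le n-r$, and the lemma is needed for all $2r<j\le n$; the remaining $r$ values of $j$ are exactly where Lemma \ref{zerodivisor} and the elimination argument in Theorem \ref{primesecants} still require a witness. Your fallback $f=p_{j,n+1}\cdot G$ with $G$ a generator is a genuine gap, not a deferral: it makes the coefficient of $p_{j,n+1}$ an element of $in_\omega(I_{2,n+1}^{\{r\}})$, hence a zero divisor modulo that ideal, so the resulting polynomial can never satisfy the hypotheses of Lemma \ref{primelemma}. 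The lemma as used downstream is really the statement that $p_{j,n+1}$ occurs linearly with a coefficient not in the ideal, and your argument does not produce that for $j$ near $n$.

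The missing idea is twofold. First, the labeling is not given: the paper's proof \emph{begins} by choosing the circular embedding, placing $n+1$ between $n$ and $1$ inside a split of size $r+1$ (or, failing that, the smallest split of size between $r+2$ and $2r$). Your observation that a cherry $\{n,n+1\}$ kills every occurrence of $p_{n,n+1}$ in the generators is correct, but it is an obstruction to a bad labeling, not to the lemma; the chosen embedding rules that configuration out. Second, you do not need $p_{j,n+1}$ to appear in the crossing monomial itself. The paper takes the single Pfaffian on $\{1,\ldots,2r,j,n+1\}$ for every $j$ in the range, starts from its crossing monomial $p_{1,r+2}\cdots p_{r-1,2r}\,p_{r,j}\,p_{r+1,n+1}$, and performs one or two \emph{weight-preserving quartet exchanges}: whenever a quartet of $\mathcal{T}$ has the form $ab|cd$, the two crossing matchings $\{a,c\},\{b,d\}$ and $\{a,d\},\{b,c\}$ have equal $\omega$-weight, so swapping them produces another monomial of $in_\omega(p)$. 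The chosen split structure guarantees that such swaps can move $n+1$ into a pair with $j$. Incorporating that mechanism (and the choice of labeling) is what you need to close the range $n-r<j\le n$.
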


\begin{proof}

Proving this lemma requires choosing a particular circular planar embedding of the tree $\mathcal{T}$ which we now describe.
\medskip

\noindent \underline{Case 1}: There exists a split $A|B$ in $\mathcal{T}$ such that $\#A = r+1$.

\medskip

Choose such a split and circularly label the leaves in $A$ clockwise by the labels $\{n+1, 1, 2, \ldots, r\}$ and then complete the circular labeling of $\mathcal{T}$. 
   Now consider the 
   $(2r + 2) \times (2r + 2)$ Pfaffian equation
 $p \in I_{2,n+1}^{\{r\}}$ that is the sum of monomials corresponding to perfect matchings on the set $\{1,2,\ldots, 2r, j, n+1\}$ with $2r < j \leq n$. 
 As in Corollary \ref{gbforinitial}, the monomial 
 $$p_{1,r+2}p_{2,r+3}\ldots p_{r-1,2r}p_{r,j}p_{r+1,n+1}$$ appears in $in_\omega(p)$. The restriction of $\mathcal{T}$ to the labels 
 $\{r,r+1,j,n+1\}$,
 $\mathcal{T}_{|\{r,r+1,j,n+1\}}$, is the 4-leaf tree with
 nontrivial split 
 $r(n+1)|(r+1)j$.
 Therefore, 
 $\omega(p_{r,j}p_{r+1,n+1}) = \omega(p_{r,r+1}p_{j,n+1})$. Therefore, 
 $in_\omega(p)$ also contains the monomial 
$$p_{1,r+2}p_{2,r+3}\ldots p_{r-1,2r}p_{r,r+1}p_{j,n+1}$$ of equal 
$\omega$ weight, and so 
$p_{j,n+1}$ occurs linearly in $I_{2,n+1}^{\{r\}}$.

\medskip

\noindent \underline{Case 2}: There does not exist a split $A|B$ in $\mathcal{T}$ such that $\#A = r+1$. 

\medskip

Since $\mathcal{T}$ is binary,
there exists a split $A|B$ such that $ r+1 < \#A \leq 2r$. 
Choose such a split with $\#A$ as small as possible. 
Consider $\mathcal{T}_{|A}$ as a rooted tree and starting on the side of the root with the greater number of leaves (if one exists), circularly label the leaves by $\{n+1, 1, 2, \ldots, r, r+1, \ldots ,\#A- 1\}$. Complete the labeling to a circular labeling of $\mathcal{T}$. Choosing either of the edges adjacent to the root in $\mathcal{T}_{|A}$ induces the split $(n+1)123\ldots k | (k+1) \ldots (\#A - 1)$ in $\mathcal{T}_{|A}$.
Notice also that $k<r$. Otherwise, either the set $\{(n+1),1,2,3,\ldots, k \}$ labels a split of $\mathcal{T}$ with exactly $r+1$ leaves, which we assumed was not true, or it labels a split with between $r+1$ and $2r$ leaves, contradicting that $A$ was chosen so that $\#A$ was as small as possible.

As before, for $2r<j \leq n$, consider the Pfaffian generator that is 
the sum of monomials corresponding to perfect matchings
on the set $\{1,\ldots, 2r, j, n+1\}$.
Then
$in_\omega(p)$ contains the monomial 
$\bold{m} = 
p_{1,r+2}p_{2,r+3}\ldots p_{r-1,2r}p_{r,j}p_{r+1,n+1}$. 
The monomial $p_{k, (k + r + 1)}p_{(r+1),(n+1)}$ divides 
$\bold{m}$ and 
we know that 
$\omega(p_{k, (r + 1)}p_{(k + r+1),(n+1)}) = 
 \omega(p_{k, (k + r + 1)}p_{(r+1),(n+1)})$ 
since removing the edge of 
$\mathcal{T}_{|A}$ adjacent to the 
root on the side labeled by leaves $\{(n+1),1,2,\ldots, k\}$
induces the quartet $(n+1)k|(r+1)(k+r+1)$.
Therefore, we can replace 
$p_{k, (k + r + 1)}p_{(r+1),(n+1)}$
in $\bold{m}$ by the equal weight term $p_{k, (r + 1)}p_{(k + r+1),(n+1)}$ to produce a
monomial $\bold{m}'$ of $in_\omega(p)$.

Notice that now $p_{r,j}p_{(k + r+1),(n+1)}|\bold{m}'$. 
Since $k \geq (\#A)/2 - 1$ and $r \geq (\#A)/2$,  
it must be that $k + r + 1 \geq \#A$. Therefore, 
the edge that splits $A|B$ in $\mathcal{T}$ also splits $r(n+1)|j(k+r+1)$, since the leaves in $A$ are labeled by 
$\{n+1, 1, 2, \ldots, r, r+1, \ldots ,\#A- 1\}$.
So we can replace
 $p_{r,j}p_{(k + r+1),(n+1)}$ in
$\bold{m}'$ with $p_{r,(k+r+1)}p_{j,(n+1)}$ to produce another monomial of $in_\omega(p)$. Thus, $p_{j,n+1}$ occurs linearly in $in_\omega(p)$.
\end{proof}

A particular labeling constructed from Case 2 of the previous lemma is demonstrated in the following example.

\begin{ex}
\label{tree labeling}

Let $\mathcal{T}$ be the tree pictured so that $(n + 1) = 15$ and let $r=4$. Choose $\omega$ to be any weight vector constructed from $\mathcal{T}$. 
To apply the construction in Case 1 of Lemma \ref{pjnappears}, we would need to find an edge in $\mathcal{T}$ that induces a split with exactly 5 leaves on one side. 
Since no such edge exists, we must find a split so that 
$6 \leq \#A \leq 8.$ Moreover, we must choose $A$ as small as possible. Removing the edge $e_1$ in this tree induces the correct split with $\#A = 7$.

Viewed as a rooted tree, $\mathcal{T}_{|A}$ has four leaves on one side of the root and three on the other. As per the lemma, we begin labeling on the side with $4$ leaves. In this example, $k=3$ and $(k +r + 1) = 8$.

\begin{figure}[h!]
  \label{pnjtree}
  \centering
    \includegraphics[width=7cm]{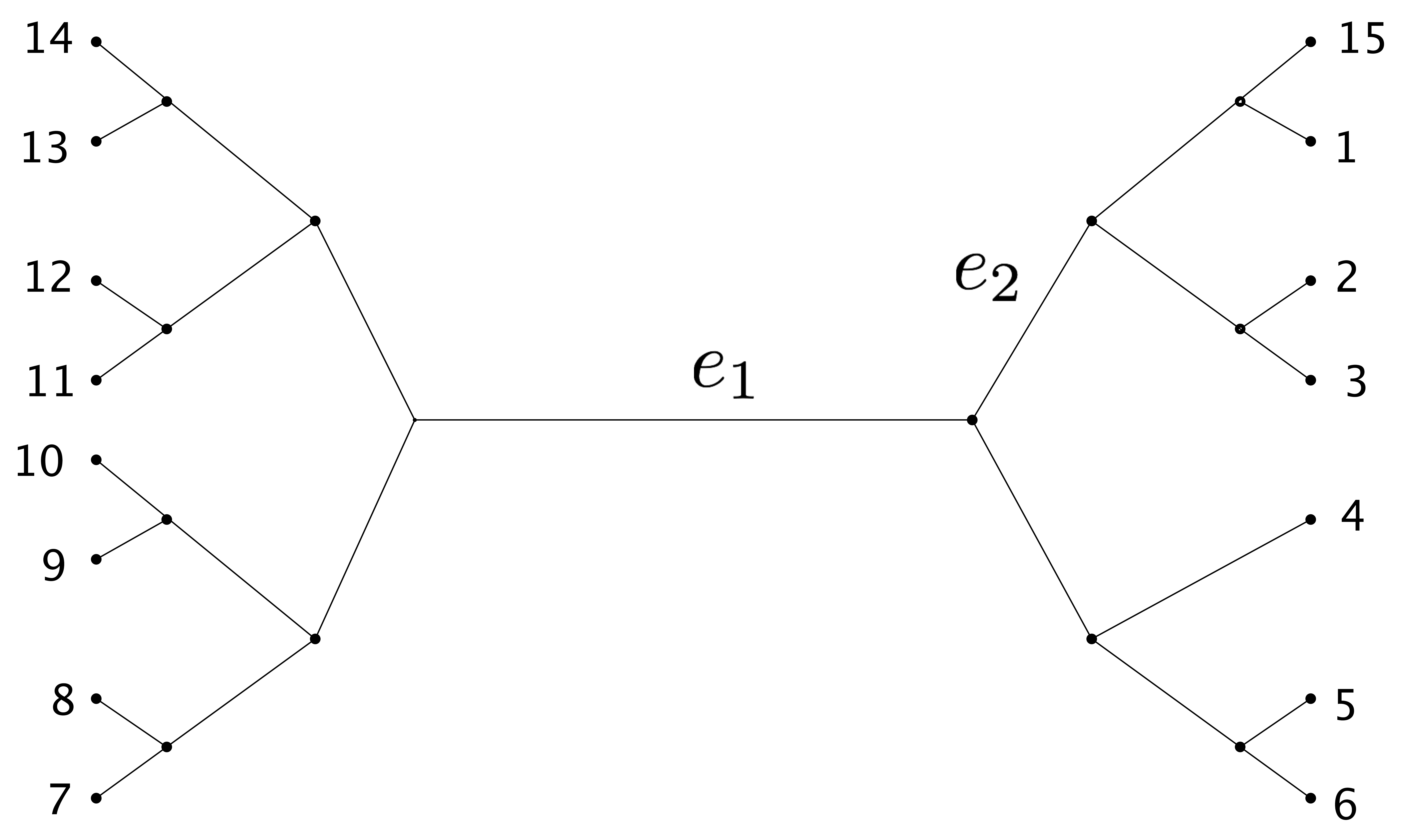}
\end{figure}


The variable $p_{j,n+1}$ occurs linearly in 
$in_\omega(I_{2,15}^{\{4\}})$ for any 
$8 <  j \leq 14$, but for the purposes of this example we will 
illustrate with $j = 14$. The crossing term of the 
Pfaffian equation on the set $\{1,2,3,4,5,6,7,8,14,15\}$ is 
$$p_{1,6}p_{2,7}p_{3,8}p_{4,14}p_{5,15}.$$ 

Removing the edge $e_2$ splits $\{3,15\}$ from $\{5,8\}$. Therefore, the Pfaffian equation includes the monomial of equal weight
$$p_{1,6}p_{2,7}p_{3,5}p_{4,14}p_{8,15}.$$ 
Finally, removing $e_1$ splits $\{8,14\}$ from $\{4,15\}$.
Thus, the monomial 
$$p_{1,6}p_{2,7}p_{3,5}p_{4,8}p_{14,15}$$ is also of equal
weight and so $p_{14,15}$ occurs linearly in this equation.

\end{ex}

\begin{lemma}
\label{zerodivisor}
If $n \geq 2r + 1$ then for $2r < j \leq n$ let $in_\omega(p)$ be the polynomial
found in Lemma \ref{pjnappears} in which $p_{j,n+1}$ occurs
linearly. Then $in_\omega(p) = g\cdot p_{j,n+1} + h$ with $g,h$ 
not involving $p_{j,n+1}$ and $g$ not a zero divisor modulo
$in_\omega(I_{2,n+1}^{\{r\}})$.
\end{lemma}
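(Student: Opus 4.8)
Write $in_\omega(p) = g\cdot p_{j,n+1} + h$, where $g$ collects all terms of $in_\omega(p)$ divisible by $p_{j,n+1}$ (with that variable divided out) and $h$ collects the remaining terms, so that neither $g$ nor $h$ involves $p_{j,n+1}$. Set $J = in_\omega(I_{2,n+1}^{\{r\}})$. The plan is to transfer the non-zero-divisor question to the monomial initial ideal of $J$, where it becomes purely combinatorial. The tool is the standard degeneration fact: for a term order $\prec$, if $in_\prec(g)$ is a non-zero-divisor modulo $in_\prec(J)$, then $g$ is a non-zero-divisor modulo $J$. To see this, suppose $gf \in J$ with $f \notin J$ and choose such an $f$ with $in_\prec(f)$ minimal; then $in_\prec(g)\,in_\prec(f) = in_\prec(gf) \in in_\prec(J)$ forces $in_\prec(f) \in in_\prec(J)$, and subtracting a suitable element of $J$ with the same leading term yields an admissible $f$ with strictly smaller leading term, a contradiction. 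I would apply this with $\prec\,=\,\prec_{circ}$. Since $\prec_{circ}$ refines $\omega$ (as shown in the proof of Corollary \ref{gbforinitial}), we have $in_{\prec_{circ}}(J) = in_{\prec_{circ}}(I_{2,n+1}^{\{r\}})$, which by Theorem \ref{pfaffiangb} is the crossing monomial ideal $M$ generated by the crossing monomials of all $(2r+2)$-element subsets of $[n+1]$.

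Next I would identify $in_{\prec_{circ}}(g)$ explicitly. By Lemma \ref{pjnappears} at least one maximal-$\omega$-weight term of $p$ is divisible by $p_{j,n+1}$, so the maximal weight attained by a matching of $\{1,\dots,2r,j,n+1\}$ that contains the pair $(j,n+1)$ equals the global maximal weight. Because the weight of any such matching is $d(j,n+1)$ plus the weight of the induced matching of $\{1,\dots,2r\}$, the terms of $g\cdot p_{j,n+1}$ are exactly $p_{j,n+1}$ times the maximal-$\omega$-weight terms of the $2r\times 2r$ Pfaffian $p'$ on $\{1,\dots,2r\}$; that is, $g = \pm\, in_\omega(p')$. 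Hence $in_{\prec_{circ}}(g) = in_{\prec_{circ}}(in_\omega(p'))$, which by Corollary \ref{gbforinitial} applied to $p'$ is the crossing monomial $\mathbf{m}_0 = p_{1,r+1}p_{2,r+2}\cdots p_{r,2r}$ of $\{1,\dots,2r\}$.

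It then remains to show that $\mathbf{m}_0$ is a non-zero-divisor modulo the squarefree ideal $M$, i.e.\ that $(M : \mathbf{m}_0) = M$. The crux is a gap comparison. Every variable of $\mathbf{m}_0$ has the form $p_{a,a+r}$ with $1 \le a \le r$, so its two indices differ by exactly $r$; by contrast, each variable $p_{k_b,k_{b+r+1}}$ of a generator of $M$ pairs two entries that are $r+1$ apart in a strictly increasing list of integers, so they differ in value by at least $r+1$. Consequently no variable of $\mathbf{m}_0$ divides any generator of $M$. Therefore, if a generator $g_0$ of $M$ divides $\mathbf{m}_0\, u$ for some monomial $u$, then, since $g_0$ shares no variable with $\mathbf{m}_0$, every variable of $g_0$ must already occur in $u$, whence $g_0 \mid u$ and $u \in M$. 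This proves $(M:\mathbf{m}_0)=M$, so $\mathbf{m}_0 = in_{\prec_{circ}}(g)$ is a non-zero-divisor modulo $M$, and by the degeneration fact $g$ is a non-zero-divisor modulo $J$, which is exactly the hypothesis needed to invoke Lemma \ref{primelemma}.

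The routine steps (building $g,h$ and the degeneration lemma) are standard; the substance of the argument is concentrated in the second and third paragraphs, and I expect the main obstacle to be the precise identification of $in_{\prec_{circ}}(g)$ with the crossing monomial of $\{1,\dots,2r\}$, which relies on Lemma \ref{pjnappears} to guarantee that the $(j,n+1)$-matchings achieve the global maximal weight. Once that identification and the elementary gap comparison are in place, the non-zero-divisor property of a single variable-disjoint squarefree monomial modulo a squarefree monomial ideal is immediate.
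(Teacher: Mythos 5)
Your proposal is correct and follows essentially the same route as the paper: pass to the term order $\prec_{circ}$, identify $in_{\prec_{circ}}(g)$ with the crossing monomial of $\{1,\ldots,2r\}$, and use the index-gap observation (difference exactly $r$ versus at least $r+1$) to see that this monomial is coprime to every generator of the crossing-monomial initial ideal, hence a non-zero-divisor, which lifts back to $g$. The only difference is cosmetic — you package the lifting step as a general degeneration lemma plus $(M:\mathbf{m}_0)=M$, while the paper argues directly with a standard leading term, and you supply the justification for $in_{\prec_{circ}}(g)$ being the crossing monomial that the paper merely asserts.
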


\begin{proof}

We write $in_\omega(p) = g\cdot p_{j,n+1} + h$ and observe that the polynomial $g$ is the sum of monomials corresponding to perfect matchings on the set $\{1,\ldots,2r\}$ with equal $\omega$-weight. In other words, $g \in in_{\omega'}(I_{2,n}^{\{r\}})$, where $\omega'$ is the subvector of $\omega$ without coordinates containing $(n+1)$ in the index.
So we just need to show that $g$ is not a zero divisor modulo
$in_\omega(I_{2,n+1}^{\{r\}})$.

Recall the term order $\prec_{circ}$ from Theorem \ref{pfaffiangb} with respect to which 
the Pfaffian equations form a Gr\"obner basis for 
$in_{\omega}(I_{2,n+1}^{\{r\}})$. 
Then 
$$in_{\prec_{circ}}(g) =   p_{1,r+1}p_{2,r+2}\ldots p_{r-2,2r-1}p_{r,2r}. $$

Suppose that there exists $g' \not \in in_\omega(I_{2,n+1}^{\{r\}})$ such that $gg' \in in_\omega(I_{2,n+1}^{\{r\}})$. Then choose such a $g'$ with standard leading term with respect to the Gr\"obner basis given by $\prec_{circ}$. Then 
$$in_{\prec{circ}}(gg') = (p_{1,r+1}p_{2,r+2}\ldots p_{r-2,2r-1}p_{r,2r}) in_{\prec{circ}}(g'),$$ 
and $in_{\prec{circ}}(gg')$ must be in 
$in_{\prec_{circ}}(I_{2,n+1}^{\{r\}})$.
Therefore, $in_{\prec{circ}}(gg')$ must be divisible by one of the 
crossing monomials which are the lead terms of the 
$(2r+2) \times (2r+2) $ Pfaffian equations.
But if $p_{ij}$ appears in the crossing monomial of a $(2r+2) \times (2r+2) $ Pfaffian equation, then $j-i \geq r+1$. This implies that 
$in_{\prec{circ}}(g)$ is relatively prime to every crossing monomial. Therefore,
$in_{\prec{circ}}(g')$ must be in the leading term ideal of 
$in_{\prec_{circ}}(I_{2,n+1}^{\{r\}})$ with respect to ${\prec_{circ}}$, which is a contradiction since we assumed it was standard.
\end{proof}

\begin{proof}[Proof of Theorem \ref{primesecants}]
We will proceed by induction.
 Fix $r \in \mathbb{N}$. 
 For $n < 2r + 1$, \\
$in_\omega(I_{2,n+1}^{\{r\}}) = \langle 0 \rangle$ which is prime. 

Now suppose $in_\omega(I_{2,n+1}^{\{r\}}) \subseteq 
 Z^{n+1}$ 
 is prime and consider the ideal
  $I_{2,n}^{\{r\}} \subseteq Z^{n}$. 
First, we show that
$(in_\omega(I_{2,n+1}^{\{r\}}) \cap Z^n) =
in_{\omega'}(I_{2,n}^{\{r\}})$, where again $\omega'$ is 
the subvector of $\omega$ that does not include coordinates
with $(n+1)$ in the index.
Define a grading on 
$Z^{n+1}$ where  $\deg(p_{ij}) = 1$ if $j=(n+1)$ and $\deg(p_{ij}) = 0$ otherwise.
Then $Z^{n+1} = \displaystyle \bigoplus_{i=0}^{\infty} Z^{n+1}_i$
and $I_{2,n+1}^{\{r\}}$ is homogeneous with respect to this
grading. 
It is true in general that for a homogeneous
ideal $I$ contained in a graded ring
$R = \displaystyle \bigoplus_{i=0}^{\infty} R_i$ and a weight 
vector $\omega$, that 
$I = \displaystyle \bigoplus_{i=0}^{\infty} I \cap R_i$ and
\begin{align*}
in_\omega(I) &= \displaystyle \bigoplus_{i=0}^{\infty} in_\omega(I \cap R_i ) \\
&= \displaystyle \bigoplus_{i=0}^{\infty} ( in_\omega(I) \cap R_i).
\end{align*}
In our case, we have 
$(in_\omega(I_{2,n+1}^{\{r\}}) \cap Z^{n+1}_0) =
in_\omega(I_{2,n+1}^{\{r\}} \cap Z^{n+1}_0)$. Since 
$(I_{2,n+1}^{\{r\}} \cap Z^{n+1}_0)
 = I_{2,n}^{\{r\}}$ and 
 $Z^n$ is precisely $Z^{n+1}_0$, the degree zero piece of $Z^{n+1}$, 
$(in_\omega(I_{2,n+1}^{\{r\}}) \cap Z^n) =
in_{\omega'}(I_{2,n}^{\{r\}})$.

So now assume the statement is true for all integers less than or equal to $n \geq 2r + 2$. 
We note by Lemma \ref{pjnappears} that each $p_{j,n+1}$ appears in some equation of
$in_\omega(I_{2,n+1}^{\{r\}})$.
Lemma \ref{zerodivisor} tells us that 
the coefficient of $p_{j,n+1}$ is not a zero divisor modulo $in_\omega(I_{2,n+1}^{\{r\}})$, but this also implies that 
each coefficient is not a zero divisor modulo any elimination
 ideal of $in_\omega(I_{2,n+1}^{\{r\}})$. 
So now beginning with $j = n$, we eliminate $p_{j,n+1}$ for $2r < j \leq n$ from 
$in_\omega(I_{2,n+1}^{\{r\}})$.
Importantly, 
the equation in which 
$p_{j,n+1}$ occurs linearly found in Lemma \ref{pjnappears}
does not contain any variables of the form $p_{k,n+1}$ for $k>j$ and so is still contained in the elimination ideal after we have eliminated all of these variables. 
Therefore, at each step, we meet the conditions of Lemma \ref{primelemma}, 
which implies that each successive elimination ideal is prime if and only if 
$in_\omega(I_{2,n+1}^{\{r\}})$ is prime.

 After eliminating, we have the ideal
 $in_\omega(I_{2,n+1}^{\{r\}}) 
 \cap Z^n[p_{1,n+1}, \ldots, p_{2r,n+1}]$ which we will 
 now show is equal to 
$in_\omega(I_{2,n+1}^{\{r\}}) \cap Z^n =
in_\omega(I_{2,n}^{\{r\}})$. In other words, we will show
that after eliminating $\{p_{2r + 1,n+1}, \ldots, p_{n,n+1}\}$,
there are no equations 
involving any variable with
$n+1$ in the index in the elimination ideal.
 Then by induction, the proof will be complete.

The dimension of $I_{2,n}^{\{r\}}$, and hence the dimension of all of its initial ideals, is $2rn - 2r^2 - r$ \cite{Kleepe1980}.
Since $I_{2,n+1}^{\{r\}}$ is prime, every irreducible component of
$in_\omega(I_{2,n+1}^{\{r\}})$ has dimension $2r(n+1) - 2r^2 - r$
\cite{Kalkbrener1995}.
The birational projection of Lemma \ref{primelemma} preserves
the dimension of each component, which implies
$$\dim(in_\omega(I_{2,n+1}^{\{r\}}) \cap 
 Z^n[p_{1,n+1}, \ldots, p_{2r,n+1}]) = 
 2r(n+1) - 2r^2 - r =
 \dim( in_\omega( I_{2,n}^{\{r\}})) + 2r.$$
 Therefore, eliminating the remaining $2r$ variables must 
 decrease the dimension of each component by
 $2r$, which implies
 that the variables in $\{p_{1,n+1}, \ldots, p_{2r,n+1}\}$ 
 are free in each component of 
 $in_\omega(I_{2,n+1}^{\{r\}}) \cap 
   Z^n[p_{1,n+1}, \ldots, p_{2r,n+1}]$. We conclude
 that  $in_\omega(I_{2,n+1}^{\{r\}}) \cap 
   Z^n[p_{1,n+1}, \ldots, p_{2r,n+1}]
   = in_{\omega'}(I_{2,n}^{\{r\}})$.
\end{proof}

\section{Dimensions of Secants of the Pl\"ucker Tree Ideals}
\label{pluckerdimensions}

To address Conjecture \ref{sethsconjecture} we first construct a simple bound on $\dim(J_\mathcal{T}^{\{r\}})$.

\begin{lemma}
\label{dimbound}
 Let $\mathcal{T}$ be a tree with $c$ cherries, then $\dim(J_\mathcal{T}^{\{r\}}) \leq 2rn - 3r - (r-1)c$.
\end{lemma}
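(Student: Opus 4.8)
The plan is to bound $\dim J_\mathcal{T}^{\{r\}}=\dim V(J_\mathcal{T})^{\{r\}}$ directly from the monomial parametrization, using the differential of the join map (this is Terracini's lemma). Write $X=V(J_\mathcal{T})$, which is the closure of the image of the monomial map dual to $\phi_\mathcal{T}$,
\[
\psi\colon (\mathbb{C}^*)^{E}\to \mathbb{C}^{\binom n2},\qquad \psi(y)_{ij}=\prod_{e\in\mathrm{path}(i,j)}y_e,
\]
where $E=E(\mathcal{T})$ and $|E|=2n-3$ since $\mathcal{T}$ is binary. Then $X^{\{r\}}$ is the closure of the image of $\Psi(y^{(1)},\dots,y^{(r)})=\sum_{s=1}^r\psi(y^{(s)})$, so $\dim X^{\{r\}}$ equals the generic rank of $d\Psi$. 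Because $\partial\Psi/\partial y^{(s)}_e=\partial\psi/\partial y_e(y^{(s)})$, after rescaling the $e$-th partial of copy $s$ by the nonzero scalar $y^{(s)}_e$ the image of $d\Psi$ is spanned by the $r(2n-3)$ vectors $\bfv_e^{(s)}$ with $(\bfv_e^{(s)})_{kl}=\psi(y^{(s)})_{kl}$ if $e\in\mathrm{path}(k,l)$ and $0$ otherwise. The goal is therefore to produce $(r-1)c$ independent linear dependencies among these vectors, which yields $\dim X^{\{r\}}=\mathrm{rank}\,d\Psi\le r(2n-3)-(r-1)c=2rn-3r-(r-1)c$.

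Each cherry will contribute $r-1$ dependencies. Let $\{i,j\}$ be a cherry with common internal vertex $v$; since $\mathcal{T}$ is binary, $v$ is incident to exactly the two leaf edges $e_i,e_j$ and one further edge $f$, and removing $f$ separates $\{i,j\}$ from all other leaves. Recording the relevant path decompositions -- $\mathrm{path}(i,j)=\{e_i,e_j\}$, while $\mathrm{path}(i,k)=\{e_i,f\}\cup R_k$ and $\mathrm{path}(j,k)=\{e_j,f\}\cup R_k$ share a common segment $R_k$ for every leaf $k\notin\{i,j\}$, and no path between two leaves outside $\{i,j\}$ meets $e_i$, $e_j$, or $f$ -- one checks coordinate by coordinate that in the combination $\bfv_{e_i}^{(s)}+\bfv_{e_j}^{(s)}-\bfv_f^{(s)}$ every entry cancels except the $p_{ij}$ entry, giving
\[
\bfv_{e_i}^{(s)}+\bfv_{e_j}^{(s)}-\bfv_f^{(s)}=2\,\psi(y^{(s)})_{ij}\,\epsilon_{ij},
\]
where $\epsilon_{ij}$ is the standard basis vector of the coordinate $p_{ij}$. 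Hence for each $s$ this combination is a scalar multiple of the single vector $\epsilon_{ij}$, and as the $\psi(y^{(s)})_{ij}$ are generically nonzero the $r$ vectors (one per $s$) span a line, producing the $r-1$ relations
\[
\psi(y^{(1)})_{ij}\big(\bfv_{e_i}^{(s)}+\bfv_{e_j}^{(s)}-\bfv_f^{(s)}\big)-\psi(y^{(s)})_{ij}\big(\bfv_{e_i}^{(1)}+\bfv_{e_j}^{(1)}-\bfv_f^{(1)}\big)=0,\qquad s=2,\dots,r.
\]

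Finally I would verify that the $(r-1)c$ relations produced over all cherries are independent in the space of dependencies indexed by pairs $(e,s)$. The key point is that each leaf edge $e_i$ belongs to a unique cherry and cannot be the distinguished edge $f$ of any other cherry, so $\bfv_{e_i}^{(s)}$ occurs (with nonzero coefficient $\psi(y^{(1)})_{ij}$) in exactly one of these relations; reading off that coefficient forces the corresponding combination weight to vanish, and running over all cherries and all $s$ gives independence. Combining, $\mathrm{rank}\,d\Psi\le r(2n-3)-(r-1)c$, which is the asserted bound. The one genuinely delicate step is the cancellation identity: it rests entirely on the combinatorial fact that at a cherry the two leaves share every path to the rest of the tree apart from their private leaf edges, so I would state the path decompositions carefully and dispose separately of the degenerate small cases (for instance $n\le 4$, where two cherry-vertices can be adjacent), in which the claimed bound is in any event vacuous.
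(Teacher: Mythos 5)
Your proof is correct, but it takes a genuinely different route from the one in the paper. The paper's argument is much shorter and purely structural: since for a cherry $\{i,j\}$ every induced quartet containing both $i$ and $j$ has them on the same side, the variable $p_{ij}$ never appears in any generator of $J_\mathcal{T}$, so $V(J_\mathcal{T})$ splits as $V\times\mathbb{C}^c$; then $V(J_\mathcal{T})^{\{r\}}=V^{\{r\}}\times\mathbb{C}^c$, and because $V$ is a cone of dimension $2n-3-c$ one gets $\dim V^{\{r\}}\le r\dim V$, giving $r(2n-3-c)+c=2rn-3r-(r-1)c$. You instead run a Terracini computation on the monomial parametrization and exhibit, for each cherry and each $2\le s\le r$, an explicit linear dependency $\bfv_{e_i}^{(s)}+\bfv_{e_j}^{(s)}-\bfv_f^{(s)}=2\,\psi(y^{(s)})_{ij}\,\epsilon_{ij}$ among the tangent-space generators; your cancellation check and the independence argument via the coefficient of $(e_i,s)$ are both sound (with the small-$n$ degeneracies you rightly set aside, where the bound is anyway harmless). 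What the paper's approach buys is brevity and the fact that it isolates exactly where the deficiency comes from (a linear factor of the variety, plus the general cone inequality, quoting $\dim J_\mathcal{T}=2n-3$ from the Hilbert series); what yours buys is an explicit description of the tangent-space relations, independent of knowing that $\psi$ is generically finite, and it is the natural ``upper-bound'' counterpart of the Draisma tropical-secant technique the paper uses in Section \ref{pluckerdimensions} for the matching lower bound. Both are valid proofs of the stated inequality.
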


\begin{proof}
%

 The variables corresponding to cherries do not appear in any of the binomials generating $J_\mathcal{T}$. 
 Thus, we can write 
 $V(J_\mathcal{T}) = V \times \mathbb{C}^c$ 
 and $V(J_\mathcal{T})^{\{r\}}= V^{\{r\}} \times \mathbb{C}^c$, 
 since $\mathbb{C}^c$ is a linear space. 
 The expected dimension of $V^{\{r\}}$ is $r\dim(V) + (r - 1)$. 
 However, $J_\mathcal{T}$ being homogeneous implies that 
 $V$ is a cone and that $\dim(V^{\{r\}}) \leq r\dim(V)$. Since they share the same Hilbert series, the dimension of $J_\mathcal{T}$ is equal to that of $I_{2,n}$ which is $2n - 3$ \cite{Kleepe1980}. Thus, we have 
\begin{align*}
\dim(V(J_\mathcal{T})^{\{r\}}) &\leq r\dim(V) + c \\
&= r(2n - 3 - c) + c. \\ 
&= 2rn - 3r - (r-1)c
\end{align*}
\end{proof}

\begin{cor} 
\label{cherrycorollary}
Conjecture \ref{sethsconjecture} does not hold for any $r$.
\end{cor}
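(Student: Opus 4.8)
The plan is a pure dimension count, combining Lemma \ref{dimbound} with the known dimension of the Pfaffian ideals, and I would exhibit for each $r$ a tree on which the two ideals in Conjecture \ref{sethsconjecture} have different dimensions. Fix $r \geq 2$; the case $r = 1$ is excluded as a tautology, since there $I_{2,n}^{\{1\}} = I_{2,n}$ and $\tau\cdot in_\omega(I_{2,n}) = J_\mathcal{T}$ by hypothesis. For each such $r$ I would produce a single binary phylogenetic $[n]$-tree $\mathcal{T}$ whose two associated ideals cannot coincide.

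First I would record the two dimensions to be compared. On one side, $\dim(I_{2,n}^{\{r\}}) = 2rn - 2r^2 - r$ by \cite{Kleepe1980}; passing to an initial ideal preserves the Hilbert function and hence the dimension, and the sign change $\tau$ is a diagonal linear automorphism of $Z^n$ and so is also dimension-preserving. Thus $\dim(\tau\cdot in_\omega(I_{2,n}^{\{r\}})) = 2rn - 2r^2 - r$. On the other side, Lemma \ref{dimbound} gives $\dim(J_\mathcal{T}^{\{r\}}) \leq 2rn - 3r - (r-1)c$, where $c$ is the number of cherries of $\mathcal{T}$. Both are affine (cone) dimensions, so the comparison is legitimate: for $r = 1$ both formulas reduce to $2n - 3$, confirming consistency.

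Next I would compare the two bounds. The inequality $2rn - 3r - (r-1)c < 2rn - 2r^2 - r$ is equivalent, after cancelling $2rn$ and rearranging, to $(r-1)(2r - c) < 0$, which for $r \geq 2$ holds precisely when $c > 2r$. So it remains to exhibit a binary phylogenetic $[n]$-tree with more than $2r$ cherries. This is elementary: pairing up the leaves into cherries and joining the resulting cherry-nodes by any binary tree yields a tree with $\lfloor n/2 \rfloor$ cherries, so any $n \geq 4r + 2$ admits a tree $\mathcal{T}$ with $c = 2r + 1 > 2r$.

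Finally I would conclude. For such a $\mathcal{T}$,
\[
\dim(J_\mathcal{T}^{\{r\}}) \leq 2rn - 3r - (r-1)c < 2rn - 2r^2 - r = \dim(\tau\cdot in_\omega(I_{2,n}^{\{r\}})),
\]
so the two ideals have different dimensions and hence are unequal, and Conjecture \ref{sethsconjecture} fails for this tree. I do not expect a genuine obstacle here: everything is forced once Lemma \ref{dimbound} and Kleppe's formula are in hand. The only points that merit a word of care are (i) that both $\tau$ and the initial-ideal operation preserve dimension, so that the left-hand ideal really attains the full Pfaffian dimension $2rn - 2r^2 - r$ rather than merely being bounded by it, and (ii) the exclusion of $r = 1$, where the conjecture is vacuously true and the factor $(r-1)$ degenerates.
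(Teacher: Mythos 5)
Your proposal is correct and follows essentially the same route as the paper: compare the upper bound $2rn - 3r - (r-1)c$ from Lemma \ref{dimbound} against the Pfaffian dimension $2rn - 2r^2 - r$, deduce that any tree with $c > 2r$ cherries is a counterexample, and exhibit such a tree on $4r+2$ leaves. Your additional remarks on the $r=1$ degeneracy and on why the initial ideal attains the full dimension are sensible but do not change the argument.
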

\begin{proof}
Every initial ideal of $I_{2,n}^{\{r\}}$ has dimension $2rn - 2r^2 - r$ \cite{Kleepe1980}. Therefore, it is impossible for $J_\mathcal{T}^{\{r\}}=I_{2,n}^{\{r\}} $ if 
\begin{align*}
 2rn - 3r - (r-1)c &< 2rn - 2r^2 - r  \\
 -(r-1)c &< -2r^2 + 2r  \\
  c &>2r. \\
 \end{align*} 
 Thus, for any $r$, trees with more than $2r$ cherries serve as a counterexample. One can always construct such a tree 
 with $4r + 2$ leaves by simply attaching a cherry to each leaf in a tree with $2r + 1$ leaves.
 \end{proof}
 
The claim of Theorem \ref{gtisecants} is that when $r=2$, trees with strictly more than $4$ cherries are the only obstructions.
%
Before we begin the proof of Theorem \ref{gtisecants} we will discuss the specific structure of $I_{2,n}^{\{2\}}$ and the initial ideals $in_{\omega}(I_{2,n}^{\{2\}})$. The ideal $I_{2,n}^{\{2\}}$ is the vanishing ideal of the set of $n \times n$ rank four skew-symmetric matrices and is generated by the $6\times6$ Pfaffian equations. There are $n \choose 6$ of these degree 3  equations each with 15 terms corresponding to the perfect matchings on the 6-element subset of $[n]$ to which the equation corresponds.
Theorem \ref{pfaffiangb} tells us that the initial forms of these equations with respect to $\omega$ form a Gr\"obner basis for $in_\omega(I_{2,n}^{\{2\}})$. Without loss of generality, let $p$ be the $6\times6$ Pfaffian equation for the set 
$K = \{1,2,3,4,5,6\} \subseteq [n]$ and let $\mathcal{T}_{|K}$ be the restriction of $\mathcal{T}$ to the leaves of $K$. Up to relabeling of the leaves, there are only two 6-leaf tree topologies and the structure of $in_\omega(p)$ is completely determined by the topology of $\mathcal{T}_{|K}$.

 If $\mathcal{T}$ is the $6$-leaf caterpillar tree with nontrivial splits $12|3456$, $123|456$, and $1234|56$, then 
$$in_{\omega}(p) = p_{14}p_{25}p_{36}  - p_{14}p_{26}p_{35} - p_{15}p_{24}p_{36}   + p_{15}p_{26}p_{34} + p_{16}p_{24}p_{35}  - p_{16}p_{25}p_{34}. $$ 
If $\mathcal{T}$ is the $6$-leaf snowflake tree with nontrivial splits $12|3456$, $34|1256$, and $56|1234$, then 
\begin{align*}
in_{\omega}(p) = &p_{14}p_{25}p_{36}  - p_{14}p_{26}p_{35} - p_{15}p_{24}p_{36}   + p_{13}p_{25}p_{46} +  \\
&p_{16}p_{24}p_{35}  - p_{13}p_{26}p_{45} + p_{15}p_{23}p_{46}  - p_{16}p_{23}p_{45}.
\end{align*}

Thus, $in_\omega(I_{2,n}^{\{2\}})$ has a Gr\"obner basis 
consisting of $n \choose 6$ equations each with either six 
or eight terms. 
We call the binary phylogenetic $[n]$-tree with exactly two cherries the \emph{n-leaf caterpillar}. Although the following theorem for caterpillar trees does not generalize to a proof of Theorem \ref{gtisecants}, we include it because it is rather straightforward and establishes one of the base cases for our
inductive argument.

\begin{thm}  
\label{caterpillars} 
Let $\mathcal{C}$ be an $n$-leaf caterpillar tree and $\omega \in \mathbb{R}^{n \choose 2}$ be a weight vector such that $in_\omega(I_{2,n}) = J_\mathcal{C}$, then $in_\omega(I_{2,n}^{\{2\}}) = J_\mathcal{C}^{\{2\}}$.
\end{thm}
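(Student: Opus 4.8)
The plan is to use exactly the machinery assembled in the outline (Section~\ref{outline of proof}): establish the chain of inclusions and dimension equalities that force $in_\omega(I_{2,n}^{\{2\}}) = J_\mathcal{C}^{\{2\}}$. By Theorem~\ref{primesecants}, the ideal $in_\omega(I_{2,n}^{\{2\}})$ is prime. We already know from \cite{Sturmfels2006} that $in_\omega(I_{2,n}^{\{2\}}) \subseteq (in_\omega(I_{2,n}))^{\{2\}} = J_\mathcal{C}^{\{2\}}$. Since a prime ideal contained in an ideal of the same dimension must equal it, the whole theorem reduces to the single dimension inequality
\begin{equation*}
\dim(J_\mathcal{C}^{\{2\}}) \geq \dim(in_\omega(I_{2,n}^{\{2\}})) = 4n - 10,
\end{equation*}
where the value $4n-10$ is the common dimension $2rn - 2r^2 - r$ at $r=2$ of every initial ideal of $I_{2,n}^{\{2\}}$ from \cite{Kleepe1980}.

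\emph{First} I would pin down the target dimension. An $n$-leaf caterpillar has exactly two cherries, so Lemma~\ref{dimbound} with $c=2$, $r=2$ gives the \emph{upper} bound $\dim(J_\mathcal{C}^{\{2\}}) \leq 4n - 6 - 2 = 4n - 8$; but this is weaker than the equality I need, so the real content is producing a matching lower bound of $4n-10$ on the secant dimension. The natural tool, flagged in the outline, is the tropical secant dimension method of \cite{Draisma2008}: realize $J_\mathcal{C}$ as the toric ideal $\ker(\phi_\mathcal{C})$ via the edge-parametrization $\phi_\mathcal{C}$, read off the lattice points $A \subseteq \mathbb{Z}^{\#\text{edges}}$ that are the exponent vectors of the monomials $\phi_\mathcal{C}(p_{ij})$, and then bound the dimension of the second secant of the associated toric variety from below by exhibiting a single well-chosen tropical/combinatorial witness — concretely, a pair of points in the tropicalization (a ``policy'' assigning each coordinate to one of the two summands) whose combined degrees of freedom span a space of the required dimension. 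Because the caterpillar has the simplest possible tree topology, its edge set and path structure are completely explicit, so this witness can be constructed by hand and its rank computed directly.

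\emph{Then} the theorem follows formally: the lower bound $\dim(J_\mathcal{C}^{\{2\}}) \geq 4n-10$ combined with the containment $in_\omega(I_{2,n}^{\{2\}}) \subseteq J_\mathcal{C}^{\{2\}}$ and $\dim(in_\omega(I_{2,n}^{\{2\}})) = 4n-10$ forces $\dim(J_\mathcal{C}^{\{2\}}) = 4n-10$, and primeness of the smaller ideal upgrades the containment of equidimensional ideals to an equality. I expect the main obstacle to be the \emph{lower-bound} step: choosing the tropical witness for the caterpillar and verifying that its rank is exactly $4n-10$ (not less) requires correctly accounting for the collapses caused by the many degree-zero/cherry directions and the overlap between the two secant summands. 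The primeness input and the inclusion are already in hand, so once the explicit tropical computation for the caterpillar is set up the conclusion is immediate; indeed, since this case is meant only to seed the base case of the later inductive argument, keeping the witness construction as transparent as possible is the priority.
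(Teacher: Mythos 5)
Your high-level strategy (inclusion $+$ primeness $+$ matching dimensions) is the general template of Section~\ref{outline of proof}, but it is \emph{not} how the paper proves this particular theorem, and as written it has a genuine gap at its load-bearing step. The paper's proof of Theorem~\ref{caterpillars} avoids the dimension computation entirely: it degenerates further, choosing $\omega'$ with $in_{\omega'}(J_\mathcal{C}) = J(P)$, the ideal of incomparable pairs of a poset, and then uses the fact (from Example 4.13 of \cite{Sturmfels2006}) that $J(P)^{\{2\}}$ is itself an initial ideal of $I_{2,n}^{\{2\}}$. Since all initial ideals of $I_{2,n}^{\{2\}}$ share one Hilbert series, the chain $in_{\omega''}(I_{2,n}^{\{2\}}) \subseteq in_{\omega'}(J_\mathcal{C}^{\{2\}}) \subseteq J(P)^{\{2\}}$ collapses to equalities, which then forces equality in $in_\omega(I_{2,n}^{\{2\}}) \subseteq J_\mathcal{C}^{\{2\}}$. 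This is why the caterpillar case is singled out as ``straightforward'': the special poset structure of the caterpillar does all the work, and neither Theorem~\ref{primesecants} nor any lower bound on $\dim(J_\mathcal{C}^{\{2\}})$ is needed.

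The gap in your version is the lower bound $\dim(J_\mathcal{C}^{\{2\}}) \geq 4n-10$, which you correctly identify as the real content but then only gesture at (``this witness can be constructed by hand''). Note that Lemma~\ref{jtdraisma} is stated only for trees with exactly 3 or 4 cherries and its induction and computational base cases do not cover the 2-cherry caterpillar, so there is nothing in the paper you can cite for this. Moreover, Lemma~\ref{lemma: Draisma} is only a lower bound: since the true dimension is $4n-10$, every choice of $v$ yields a sum of at most $4n-10$, so you must prove that the bound is actually \emph{attained} for the caterpillar, i.e.\ exhibit $v=(v_1,v_2)$ with $\dim_\rr\langle D_1(v)\rangle_\rr = \dim_\rr\langle D_2(v)\rangle_\rr = 2n-5$ and verify the two rank conditions. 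That construction and rank verification (or an induction on $n$ in the style of Lemma~\ref{jtdraisma} with a new, explicitly checked base case such as the 5- or 6-leaf caterpillar) is the entire proof, and it is missing. Everything else you say --- the containment, the primeness from Theorem~\ref{primesecants}, the value $4n-10$, and the observation that Lemma~\ref{dimbound} only gives the useless upper bound $4n-8$ --- is correct, and the route would work once the witness is supplied; but as it stands the argument is a plan rather than a proof.
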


\begin{proof}  
Recall from above that the initial ideal of a secant ideal is contained inside the secant of the initial ideal \cite{Sturmfels2006}, so we have the inclusion,
\begin{equation}
in_{\omega}(I_{2,n}^{\{2\}})\subseteq (in_{\omega}(I_{2,n}))^{\{2\}} = J_\mathcal{C}^{\{2\}}.
\end{equation}
Let $P$ be the poset on the variables of $Z^n$ given by $p_{ij} \leq p_{kl}$ if $i \leq k$ and $j \leq l$ and $J(P)$ the monomial ideal generated by incomparable pairs $p_{ij}p_{kl}$ in $P$. 
There exists a term order $\omega'$ for which $in_{\omega'} (J_C)= J(P)$ \cite[Theorem 14.16]{Miller2005}. 
Taking initial ideals with respect to $\omega'$ in (1), we have
 \begin{equation}
in_{\omega'}(in_{\omega}(I_{2,n}^{\{2\}}))
\subseteq in_{\omega'}(J_\mathcal{C}^{\{2\}}) 
\subseteq (in_{\omega'}(J_\mathcal{C}))^{\{2\}}
= J(P)^{\{2\}}.
\end{equation}
%
%

In fact, there exists $\omega'' = \omega + \epsilon\omega'$ 
such that $ in_{\omega'}(in_{\omega}(I_{2,n}^{\{2\}})) =  
in_{\omega''}(I_{2,n}^{\{2\}})$ \cite[Proposition 1.13]{sturmfels1996grobner}.
It is also shown in \cite{Sturmfels2006} Example 4.13, 
that we can choose a term order $\prec$ for which 
$in_\prec(I_{2,n}^{\{2\}}) = J(P)^{\{2\}}$. 
This implies 
$$HS( J(P)^{\{2\}} ,t) = 
HS(in_\prec(I_{2,n}^{\{2\}}) ,t) = 
HS(in_{\omega''}(I_{2,n}^{\{2\}})) ,t),$$ which gives equality 
all across (2). This further implies that 
$$ HS(J_C^{\{2\}},t) = 
HS(in_{\omega''}(I_{2,n}^{\{2\}}),t) = 
HS(in_{\omega}(I_{2,n}^{\{2\}}),t)
,$$ giving equality in (1) and completing the proof. \end{proof}

\bigskip

Now to complete the proof of Theorem \ref{gtisecants}, 
we need only show that
for trees with exactly 3 or 4 cherries,
$\dim(in_{\omega}(I_{2,n}^{\{2\}})) = 
\dim( J_\mathcal{T}^{\{2\}}).$
Because of the containment
$\dim(in_{\omega}(I_{2,n}^{\{2\}})) \subseteq 
\dim( J_\mathcal{T}^{\{2\}}),$ 
this amounts to showing
 $ \dim( J_\mathcal{T}^{\{2\}}) \geq 2rn - 2r^2 - r$.
To do this, we will use the tropical secant dimension approach of \cite{Draisma2008}. We adapt the notation and terminology here for our purposes but refer the reader there for a complete description of the method.

Let $C_1, \ldots, C_r$  be affine cones. Suppose further that 
$C_i = \overline {\text{Im}(f_i)}$ where $f_i : \cc^{m_i} \rightarrow  \cc^{|B|}$ is a morphism. For $1 \leq i \leq r$, we write $f_i$ as a list $ (f_{ i , b })_{b\in B}$.
For our purposes in this paper, we may assume that each $f_{i,b}$ is a monomial, so that $f_{i , b} = x^{\alpha_{i,b}}.$  
For affine cones the mixing parameters introduced when constructing the join variety are superfluous. Thus, we can write the \emph{join} of the affine cones $C_1,\ldots, C_r$  
 as

$$
 C_1 + \ldots + C_r :=\overline{ \{ c_1 + \ldots + c_r : c_i \in C_i, 1 \leq i \leq r\}}. 
 $$

\begin{defn}
\label{winning}
For $ v = (v_1, \ldots, v_r )  \in \displaystyle \bigoplus_{i=1}^r \rr^{m_i} $, let
$$ 
D_i(v) := \{ \alpha_{i,b}  : \langle v_i, \alpha_{i,b} \rangle > \langle v_j, \alpha_{j,b} \rangle \text{ for all }  j \not = i  \}.
$$
If $\alpha_{i,b} \in D_i(v)$ then $\emph{i wins b at v} $ and we call $D_i(v)$ the set of \emph{winning directions of i at v}.
\end{defn}

\noindent Finally, the result below gives us a method of constructing lower bounds on the dimension of the join of affine cones.

\begin{lemma}
\label{lemma: Draisma}
\cite{Draisma2008}
The affine dimension of $ C_1 + \ldots + C_r $ is at least the maximum, taken over all 
$ v = (v_1, \ldots, v_r )  \in \displaystyle \bigoplus_{i=1}^r \rr^{m_i} $, of the sum 
$$\displaystyle\sum_{ i = 1 }^r {\rm dim}_{\rr} \langle D_i(v) \rangle_\rr .$$
\end{lemma}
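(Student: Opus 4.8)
\emph{Proof proposal.} The plan is to realize $\dim(C_1 + \cdots + C_r)$ as the generic rank of the Jacobian of the parametrization of the join, and then to bound that rank from below by exhibiting a maximal minor that does not vanish after passing to a one-parameter degeneration governed by $v$. Concretely, since the mixing parameters are superfluous for affine cones, the join is the closure of the image of the morphism $F \colon \bigoplus_{i=1}^r \cc^{m_i} \to \cc^{|B|}$ given by $F(x^{(1)}, \ldots, x^{(r)}) = \sum_{i=1}^r f_i(x^{(i)})$, whose $b$-coordinate is $\sum_{i=1}^r (x^{(i)})^{\alpha_{i,b}}$. Working over $\cc$, the dimension of $\overline{\mathrm{Im}(F)}$ equals the rank of $\mathrm{Jac}(F)$ at a generic point, and because rank is lower semicontinuous on the irreducible domain $\bigoplus_i \cc^{m_i}$, it suffices to produce a single family of points at which the rank is at least $\sum_i \dim_\rr \langle D_i(v)\rangle_\rr$.

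First I would fix $v$ and substitute the curve $x^{(i)}_k = c_{i,k}\, t^{v_{i,k}}$, where the $c_{i,k}$ are generic nonzero constants and $t$ is a parameter; after replacing $v$ by a nearby rational vector (which only enlarges the winning sets, since the inequalities defining $D_i(v)$ are strict) and clearing denominators, the exponents may be taken to be integers. Under this substitution the $(b,(i,k))$-entry of $\mathrm{Jac}(F)$ is $\alpha_{i,b,k}\, c_i^{\alpha_{i,b}}\, c_{i,k}^{-1}\, t^{\langle v_i, \alpha_{i,b}\rangle - v_{i,k}}$, and after scaling each column $(i,k)$ by the unit $t^{v_{i,k}}$ the $t$-valuation of an entry in row $b$ lying in the block of group $i$ is exactly $\langle v_i, \alpha_{i,b}\rangle$. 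For each $i$, set $d_i := \dim_\rr \langle D_i(v)\rangle_\rr$, choose $d_i$ coordinates $b$ won by $i$ whose exponent vectors $\alpha_{i,b}$ are linearly independent and span $\langle D_i(v)\rangle_\rr$, and choose $d_i$ variables $x^{(i)}_k$ so that the resulting $d_i \times d_i$ exponent submatrix $(\alpha_{i,b,k})$ is invertible (possible since those rows are independent). Collecting all these rows and columns yields an $R \times R$ submatrix of $\mathrm{Jac}(F)$ with $R = \sum_i d_i$.

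The crux is to show this maximal minor is not identically zero, which I would do by extracting its leading term in $t$. Expanding the determinant as a signed sum over bijections between the chosen rows and columns, the total valuation of a term is $\sum_b \langle v_{j(b)}, \alpha_{j(b),b}\rangle$, where $j(b)$ is the group supplying the column matched to row $b$. Since each chosen row $b$ is won by its group $i$, we have $\langle v_i, \alpha_{i,b}\rangle > \langle v_j, \alpha_{j,b}\rangle$ for all $j \neq i$, so any bijection that fails to match some row $b$ to a column of its winning group has strictly smaller valuation; the maximum is attained precisely by the block-respecting bijections. Hence the leading coefficient equals, up to sign and a nonzero monomial in the $c_{i,k}$, the product $\prod_{i=1}^r \det(\alpha_{i,b,k})$ of the block determinants, which is nonzero by construction. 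Therefore the minor is nonzero for generic $t$, the rank of $\mathrm{Jac}(F)$ along the curve is at least $R$, and by semicontinuity the generic rank, which is $\dim(C_1 + \cdots + C_r)$, is at least $\sum_i \dim_\rr \langle D_i(v)\rangle_\rr$; maximizing over $v$ gives the claim. I expect this no-cancellation step to be the main obstacle: it is exactly the strictness of the winning inequalities that forces the degenerate Jacobian to become block-diagonal at top order, while the independence of the chosen winning directions is what keeps each block nondegenerate.
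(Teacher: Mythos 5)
Your argument is correct, and it reconstructs the original proof of this result: the paper itself gives no proof, citing the lemma directly from Draisma's work, where the argument is exactly this one-parameter degeneration $x^{(i)}_k = c_{i,k}t^{v_{i,k}}$ of the Jacobian of the join parametrization, with the strictness of the winning inequalities forcing the top-order part of the chosen maximal minor to factor as a product of block determinants $\det(\alpha_{i,b,k})$. The only point worth stating a touch more carefully is the rational perturbation step: for $v'$ sufficiently close to $v$ every strict inequality defining $D_i(v)$ persists, so $D_i(v)\subseteq D_i(v')$, which is all the lower bound requires -- as you note. No gaps.
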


Of course, we are actually interested in the 
dimension of the ideal $J_\mathcal{T}^{\{2\}}$.
 To apply the lemma, we regard the underlying
 projective variety $V(J_\mathcal{T})$ as an affine cone
 so that
  $\dim(J_\mathcal{T}^{\{2\}}) =
  \dim( V(J_\mathcal{T}) + V(J_\mathcal{T})).$ 
 So here, $r=2$, $C_1 = C_2 =  V(J_\mathcal{T})$,
 and $B = \{ \{k,l\} : 1 \leq k < l \leq n\}$.
 Recall from Section \ref{GTI} that $J_\mathcal{T}$ 
is the Zariski closure of the monomial map
 $\phi_\mathcal{T}: Z^n \rightarrow \mathbb{C}[y]$, 
 where $\phi_\mathcal{T}(p_{ij})$ is the square-free monomial parametrizing $p_{ij}$. 
Letting $\alpha_{ij}^\mathcal{T} \in \mathbb{R}^{2n - 3}$ be the $0/1$ coefficient vector of $\phi_\mathcal{T}(p_{ij})$
we have a simplified version of the lemma.

\begin{lemma}
\label{lemmadraismaJT}
The dimension of $ V(J_\mathcal{T}) + V(J_\mathcal{T}) $ is at least the maximum, taken over all $ v = (v_1, v_2)  \in  \rr^{2n - 3} \oplus \rr^{2n - 3} $, of the sum 
$$\dim_{\rr} \langle D^\mathcal{T}_1(v) \rangle_\rr  
 + \dim_{\rr} \langle D^\mathcal{T}_2(v) \rangle_\rr .$$
\end{lemma}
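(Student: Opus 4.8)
The plan is to obtain this statement as a direct specialization of Lemma~\ref{lemma: Draisma} to the case at hand, so the work is essentially a matter of matching the data of the general lemma to our monomial parametrization. First I would recall from Section~\ref{GTI} that $J_\mathcal{T} = \ker(\phi_\mathcal{T})$ is the toric ideal of the monomial map $\phi_\mathcal{T}$, so that $V(J_\mathcal{T})$ is exactly the Zariski closure of the image of the associated map $f_\mathcal{T} \colon \cc^{2n-3} \to \cc^{\binom{n}{2}}$, whose parameters are the $2n-3$ edge variables $y_e$ of $\mathcal{T}$ and whose $\{k,l\}$-component is the square-free monomial $y^{\alpha^\mathcal{T}_{kl}}$. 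This exhibits $V(J_\mathcal{T})$ as an affine cone of precisely the form $\overline{\text{Im}(f)}$ with monomial coordinate functions required by Lemma~\ref{lemma: Draisma}, and it identifies the exponent data as the $0/1$ vectors $\alpha^\mathcal{T}_{kl} \in \rr^{2n-3}$.

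Next I would specialize the general lemma to $r = 2$ with $C_1 = C_2 = V(J_\mathcal{T})$, so that $m_1 = m_2 = 2n - 3$ and $B = \{\{k,l\} : 1 \leq k < l \leq n\}$. The essential simplification is that, because the two cones coincide, the exponent vectors for the two copies agree: $\alpha_{1,\{k,l\}} = \alpha_{2,\{k,l\}} = \alpha^\mathcal{T}_{kl}$. With only two summands, the defining condition of $D_i(v)$ in Definition~\ref{winning} --- that $\langle v_i, \alpha_{i,b}\rangle > \langle v_j, \alpha_{j,b}\rangle$ for all $j \neq i$ --- reduces to the single comparison of $\langle v_1, \alpha^\mathcal{T}_{kl}\rangle$ against $\langle v_2, \alpha^\mathcal{T}_{kl}\rangle$. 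Writing $D^\mathcal{T}_1(v)$ and $D^\mathcal{T}_2(v)$ for the resulting sets of winning directions then reproduces exactly the objects in the statement.

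With these identifications in place, Lemma~\ref{lemma: Draisma} immediately gives that $\dim(V(J_\mathcal{T}) + V(J_\mathcal{T}))$ is at least the maximum over $v = (v_1, v_2)$ of $\dim_\rr \langle D^\mathcal{T}_1(v)\rangle_\rr + \dim_\rr \langle D^\mathcal{T}_2(v)\rangle_\rr$, which is the claim. I do not expect any genuine obstacle in proving this lemma itself: the only points demanding care are the bookkeeping facts that a binary phylogenetic $[n]$-tree has exactly $2n-3$ edges (so that the parameter space and the exponent vectors live in $\rr^{2n-3}$) and that the toric description really presents $V(J_\mathcal{T})$ as an image closure. The substantive difficulty is deferred to the subsequent application, where one must construct explicit weight vectors $v$ for trees with three or four cherries that drive the right-hand side up to the target value $2rn - 2r^2 - r$.
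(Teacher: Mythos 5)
Your proposal is correct and matches the paper exactly: the paper offers no separate proof of Lemma~\ref{lemmadraismaJT}, instead presenting it as the immediate specialization of Lemma~\ref{lemma: Draisma} to $r=2$, $C_1 = C_2 = V(J_\mathcal{T})$, and $B = \{\{k,l\} : 1 \leq k < l \leq n\}$, with the exponent vectors $\alpha^\mathcal{T}_{kl} \in \rr^{2n-3}$ coming from the monomial parametrization $\phi_\mathcal{T}$. Your bookkeeping remarks (the $2n-3$ edge parameters, the toric image-closure description, and the reduction of the winning condition to a single comparison when $r=2$) are precisely the identifications the paper relies on implicitly.
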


\begin{lemma}
\label{jtdraisma}
Let $\mathcal{T}$ be a binary phylogenetic $[n]$-tree with exactly 3 or 4 cherries, then 
$\dim(J_\mathcal{T}^{\{2\}}) \geq  4n - 10$.
\end{lemma}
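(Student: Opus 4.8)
The plan is to invoke Lemma \ref{lemmadraismaJT}: it suffices to produce a single $v = (v_1,v_2) \in \rr^{2n-3}\oplus\rr^{2n-3}$ with $\dim_\rr\langle D^\mathcal{T}_1(v)\rangle_\rr + \dim_\rr\langle D^\mathcal{T}_2(v)\rangle_\rr \geq 4n-10$. The observation that organizes the search is that the two winning sets depend only on the difference $w := v_1 - v_2$, viewed as a weighting of the $2n-3$ edges of $\mathcal{T}$. Indeed, since $\alpha^\mathcal{T}_{kl}$ is the $0/1$ indicator of the path $P(k,l)$ between leaves $k$ and $l$, we have $\langle v_i, \alpha^\mathcal{T}_{kl}\rangle = \sum_{e\in P(k,l)} v_i(e)$, so $\{k,l\}\in D^\mathcal{T}_1(v)$ exactly when the $w$-length of $P(k,l)$ is positive and $\{k,l\}\in D^\mathcal{T}_2(v)$ exactly when it is negative. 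Thus I must exhibit an edge-weighting $w$ whose positive-length and negative-length path configurations have ranks summing to at least $4n-10$ inside $\rr^{2n-3}$. Recall from the proof of Lemma \ref{dimbound} that $\dim V(J_\mathcal{T}) = 2n-3$, so the full family $\{\alpha^\mathcal{T}_{kl}\}$ has rank $2n-3$; hence the maximum possible value of the sum is $2(2n-3)=4n-6$, and the goal is to lose at most $4$ dimensions overall.

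Concretely, I would take $w$ to assign a generic real value $x_i$ to each pendant edge and an infinitesimal generic value to each internal edge. Because the pendant edges $e_k,e_l$ each occur exactly once in $P(k,l)$, the $w$-length of $P(k,l)$ equals $x_k + x_l$ up to an infinitesimal correction, so the sign of each path length is the sign of $x_k+x_l$. This realizes the partition of the pairs into a ``positive-sum'' class and a ``negative-sum'' class by an honest weighting (hence by a genuine $v$), and it reduces the problem to choosing the leaf values $x_1,\dots,x_n$ — guided by the topology of $\mathcal{T}$ and by how its $c$ cherries sit in the circular order — so that both classes are nearly full rank.

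The cherries are treated separately, and this is where the hypothesis $c\in\{3,4\}$ enters. For a cherry $\{a,b\}$ with pendant edges $e_a,e_b$ and adjacent internal edge $f$, the weighting $e_a^\ast + e_b^\ast - f^\ast$ vanishes on every non-cherry path vector but not on $\alpha^\mathcal{T}_{ab}$; hence $\alpha^\mathcal{T}_{ab}$ lies outside the span of the non-cherry path vectors. Choosing the sign of $x_a + x_b$ freely places this vector in whichever class I wish and raises that class's rank by one, so the cherries contribute $c$ to the total. The task then becomes showing that the two sign classes of the \emph{non-cherry} pairs have ranks summing to at least $4n-10-c$; equivalently, that the cherry-contracted configuration $V$, of dimension $2n-3-c$, has its $2$-secant of dimension at least $2(2n-3-c)-(4-c)$, i.e.\ secant defect at most $4-c$. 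I would compute each class's rank as $2n-3$ minus the dimension of the space of edge-weightings vanishing on all of its paths, reading off these kernels from the tree.

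The hard part will be this rank/kernel computation for the two sign classes as a function of the tree topology, together with a guarantee that the leaf values can always be chosen to meet the defect bound $4-c$. The case $c=4$ is the tightest: there the bound forces $V$ to be \emph{non-defective}, leaving no slack, and it is exactly the admissible placements of four cherries in the circular order that must be accommodated — mirroring the ``fewer than five cherries'' threshold of Theorem \ref{gtisecants}, beyond which Lemma \ref{dimbound} already gives $\dim(J_\mathcal{T}^{\{2\}}) \leq 4n-6-c < 4n-10$. A secondary subtlety is realizability: the designed sign pattern must arise from an actual weighting $w$ rather than from an arbitrary two-coloring of the pairs, and the leaf-value model above is chosen precisely to finesse this point.
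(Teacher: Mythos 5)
Your setup is sound and matches the paper's framework: you correctly reduce to Lemma \ref{lemmadraismaJT}, correctly observe that for $r=2$ the winning sets depend only on the edge-weighting $w=v_1-v_2$ (a pair $\{k,l\}$ goes to class $1$ or $2$ according to the sign of the $w$-length of the path $P(k,l)$), and your observation that each cherry vector $\alpha^{\mathcal T}_{ab}$ lies outside the span of all the other path vectors (via the functional $e_a^\ast+e_b^\ast-f^\ast$) is correct and cleanly isolates the cherries' contribution. But the proposal stops exactly where the lemma's content begins. The entire difficulty is to exhibit, for \emph{every} tree topology with $3$ or $4$ cherries, a weighting whose two sign classes have ranks summing to at least $4n-10$, and you explicitly defer this (``the hard part will be this rank/kernel computation \dots together with a guarantee that the leaf values can always be chosen''). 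No argument is given that the required leaf values $x_1,\dots,x_n$ exist, and your ansatz is additionally restrictive: taking pendant weights dominant forces the class of $\{k,l\}$ to be $\mathrm{sign}(x_k+x_l)$, i.e.\ a threshold-type partition of the pairs, and it is not shown that within this restricted family the defect bound $4-c$ is always achievable. In the tightest case $c=4$ you yourself note that both non-cherry classes must be \emph{full rank} in a $(2n-7)$-dimensional space, which is a genuinely nontrivial claim about which sets of path vectors determine a tree metric; nothing in the proposal establishes it.

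For comparison, the paper proves the lemma by induction on $n$: the base cases (the $6$-leaf snowflake and the unique $8$-leaf tree with four cherries) are verified computationally, and the inductive step attaches the new leaf $(n+1)$ at a non-cherry position, solves a small linear system for the weights of the three new edges so that each player wins one of $\{\alpha^{\mathcal T}_{L_1,n+1},\alpha^{\mathcal T}_{L_2,n+1}\}$ and one of $\{\alpha^{\mathcal T}_{L_3,n+1},\alpha^{\mathcal T}_{L_4,n+1}\}$ while preserving all previous wins, and then uses the Tree-Metric theorem to certify that each class's rank grows by exactly $2$. Note that the weights produced by that construction are not of your ``generic pendant, infinitesimal internal'' form. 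To complete your route you would need either a direct proof of the rank bound for the two sign classes as a function of the topology and the chosen $x_i$ (for all admissible cherry placements), or an inductive mechanism playing the role of the paper's step; as written, the lemma is not proved.
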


\begin{proof} We will  prove by induction on $n$ that there exists a vector 
$v=(v_1, v_2) \in  \rr^{2n - 3} \oplus \rr^{2n - 3}$ 
such that 
$\dim_{\rr} \langle D^\mathcal{T}_1(v) 
\rangle_\rr = \dim_{\rr} \langle D^\mathcal{T}_2(v) \rangle_\rr  = 2n - 5$. 
First, note that every tree with exactly 3 cherries can be constructed by successively attaching leaves to the snowflake tree so that the new leaf is not involved in a cherry. Every tree with exactly 4 cherries can be constructed in the same manner from the unique 8-leaf tree with 4 cherries. By random search, we can find vectors that give us the lower bound for these two trees establishing our base cases. These vectors and the computations to verify the lower bounds can be found in the Maple worksheet ${\tt{SecantDimension.mw}}$ 
located at the author's website.

Assume the statement is true for all binary phylogenetic $[n]$-trees and let $\mathcal{T}$ be a binary phylogenetic $[n + 1]$-tree  with exactly 3 or 4 cherries. 
Label $\mathcal{T}$ so that the leaf labeled by $(n+1)$ is not part of a cherry. 
Let $\mathcal{R} = \mathcal{T}_{|[n]}$ be the tree obtained by
restricting $\mathcal{T}$ to the leaves labeled by $[n]$ and 
deleting the resulting degree two vertex.
By our inductive assumption, there exists 
 $v=(v_1, v_2) \in  \rr^{2n - 3} \oplus \rr^{2n - 3}$ 
 such that 
 $\dim_{\rr} \langle D^\mathcal{R}_1(v) \rangle_\rr 
 = \dim_{\rr} \langle D^\mathcal{R}_2(v) \rangle_\rr 
  = 2n - 5$. 
  Our goal will be to construct a new vector $w = (w_1, w_2) \in   \rr^{2(n+1) -3 } \oplus \rr^{2(n+1) -3 }$ so that
   $\dim_{\rr} \langle D^\mathcal{T}_1(w) \rangle_\rr 
   = \dim_{\rr} \langle D^\mathcal{T}_2(w) \rangle_\rr  = 2(n+1) - 5$. 
   
When adding the $(n+1)$ leaf to $\mathcal{R}$, we introduce ``new" edges $e_a$, $e_b$, and $e_{n+1}$ and eliminate the edge $e$. Let $u_a$ be the vertex of $e_a$ not shared with
 $e_b$ and likewise let  $u_b$ be the vertex of $e_b$ not shared with $e_a$.
Arbitrarily choose two leaves $L_1$ and $L_2$ such that the path from these leaves to $(n+1)$ passes through $e_a$. 
Also choose leaves $L_3$ and $L_4$ such that the path from these leaves to $(n+1)$ passes through $e_b$. 
Such leaves exist since $(n+1)$ is not contained in a cherry. 
Figure \ref{diminductiontree} depicts the situation.

\begin{figure}[h!]
  \caption{An example of the labeling scheme described in Lemma \ref{jtdraisma}.}
  \label{diminductiontree}
  \centering
    \includegraphics[width=0.4\textwidth]{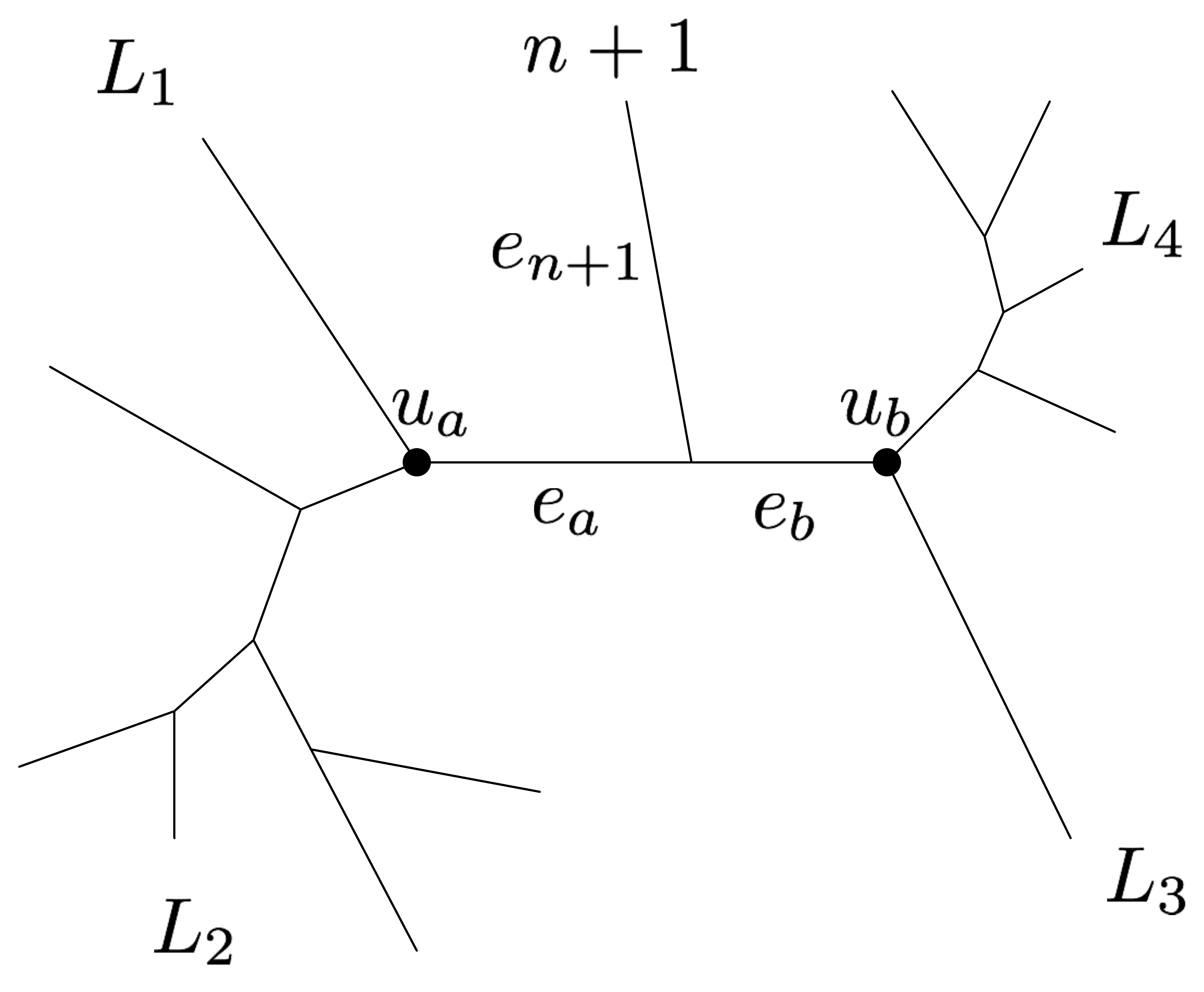}
\end{figure}

Delete the entry of $v_1$ and that of $v_2$ corresponding to the parameter $y_e$ to form 
$v_1', v_2' \in \mathbb{R}^{2n - 4}$. 
Define $w_1 = (v_1', w_1^a, w_1^b, w_1^{n+1})$ and 
 $w_2 = (v_2', w_2^a, w_2^b, w_2^{n+1})$
  where the entries of $w$ correspond to the edges of 
  $\mathcal{T}$ in the obvious way. 

Our goal will be to choose the six new vector entries so that
 $s$ wins $\alpha_{ij}^\mathcal{T} $ at $w$
  if and only if 
  $s$ wins $\alpha_{ij}^\mathcal{R}$ at $v$ 
  for $1 \leq i < j \leq n$.
   Moreover, we will want both 1 and 2 to win one of 
   $\{ \alpha^\mathcal{T}_{L_1,n + 1}, \alpha^\mathcal{T}_{L_2,n + 1} \}$
    and  $\{ \alpha^\mathcal{T}_{L_3,n + 1},\alpha^\mathcal{T}_{L_4,n + 1}\}$. 
    First, we will see why this will guarantee that $\dim_{\rr} \langle D^T_1(w) \rangle_\rr = \dim_{\rr} \langle D^T_2(w) \rangle_\rr  = 2(n+1) - 5$.

Form the matrix $A(\mathcal{T})$ with rows equal to \emph{all} the vectors $\alpha^\mathcal{T}_{ij}$. Let 

 $$
 \omega =
 \begin{pmatrix}
\omega(e_1) \\ 
\vdots \\ 
\omega(e_{2(n+1)-3}) \\
\end{pmatrix},
$$
be a vector of edge lengths for $\mathcal{T}$. 
Since $\alpha^\mathcal{T}_{ij} \cdot \omega$ 
gives us the distance between leaves $i$ and $j$ in $\mathcal{T}$, $A(\mathcal{T})\omega$ determines a metric on the leaves of $\mathcal{T}$. By the \emph{Tree-Metric theorem} (\cite{Pachter2004, Semple2003})  $\omega$ is the unique solution to $A(\mathcal{T})x = A(\mathcal{T})\omega$. Therefore, the rank of $A(\mathcal{T})$ is $2n - 3$.
Thus, if we can uniquely recover all of the edge lengths assigned to $\mathcal{T}$ from a matrix, the matrix has rank
at least $2(n+1) - 3$.

Let $\omega'$ be a vector of edge lengths for $\mathcal{R}$ where the lengths of edges shared between $\mathcal{R}$ and $\mathcal{T}$ are the same and $\omega'(e) = \omega(e_a) + \omega(e_b)$. 
Form the matrix $M^\mathcal{R}_s(v)$ 
with rows equal to the vectors in $D^\mathcal{R}_s(v)$.
By induction, this matrix has
rank equal to $2n -5$. Let $M^\mathcal{R}_s(v)' $ be the matrix $M^\mathcal{R}_s(v)$ augmented with two additional columns from $A(\mathcal{R})$ so that rank$(M^\mathcal{R}_s(v)') = 2n - 3$. 
Since this matrix is full rank, there is again a unique solution to 
$$
M^\mathcal{R}_s(v)'x = M^\mathcal{R}_s(v)'\omega'.
$$
This implies that we can uniquely determine the lengths of all $2n - 3$ edges in $\mathcal{R}$. 
As a corollary, we can recover the lengths of all edges in $\mathcal{T}$ that are also in $\mathcal{R}$
and $\omega(e_a) + \omega(e_b)$, the sum of the lengths of edges $e_a$ and $e_b$ in $\mathcal{T}$. 

Without loss of generality, suppose we have constructed $w = (w_1,w_2)$ so that 
$M^\mathcal{T}_1(v)$
 contains all of the columns from $M^\mathcal{R}_1(v)$ and
 columns corresponding to  
 $\alpha^\mathcal{T}_{L_1,n + 1} $ and 
 $ \alpha^\mathcal{T}_{L_3,n + 1}$. 
 Then let $M^\mathcal{T}_1(v)'$ be the matrix
 that contains all of the columns from $M^\mathcal{R}_1(v)'$ and columns corresponding to  
 $\alpha^\mathcal{T}_{L_1,n + 1} $ and 
 $ \alpha^\mathcal{T}_{L_3,n + 1}$.
 These columns enable us to recover the lengths of the paths
 from $L_1$ to $(n + 1)$ and from $L_3$ to $(n+1)$ in $\mathcal{T}$.
We will now show
 how this will enable us to determine
the lengths of the remaining edges, $e_{n+1}$, $e_a$, and $e_b$ uniquely. As explained, 
being able to determine all of the edge lengths of $\mathcal{T}$
from $M^\mathcal{T}_1(v)'$ shows that $M^\mathcal{T}_1(v)'$
has rank $2(n+1) - 3$.

Since we know the length of the path from $L_1$ to $(n+1)$ and the length of every edge between $L_1$ and $(n+1)$ except $e_{n+1}$ and $e_a$, we can determine $\omega(e_{n+1}) + \omega(e_a)$. 
Likewise, we know the length of the path from $L_3$ to $(n+1)$ and 
the length of
every edge between $L_3$ and $(n+1)$ except $e_{n+1}$ and $e_b$, so we can recover $\omega(e_{n+1}) + \omega(e_b)$. 
Combined with our knowledge of $\omega(e_a) + \omega(e_b)$
 we can determine the lengths of $e_{n+1}$, $e_a$, and $e_b$. Uniqueness implies that the augmented matrix 
$M^\mathcal{T}_1(w)'$ has rank $2(n+1) - 3$ 
and so $M^\mathcal{T}_1(w)$ has rank $2(n+1) - 5$ as desired.
If we have also chosen 
$w = (w_1,w_2)$ so that 
$M^\mathcal{T}_2(v)$
 contains all of the columns from $M^\mathcal{R}_2(v)$ and
 columns corresponding to  
 $\alpha^\mathcal{T}_{L_2,n + 1} $ and 
 $ \alpha^\mathcal{T}_{L_4,n + 1}$, then 
the same is true for $M^\mathcal{T}_2(w)$, and the theorem is complete.

It remains to show that we can actually choose the six new 
vector entries $w_1^a, w_1^b, w_1^{n+1}, \\w_2^a, w_2^b,$ and $w_2^{n+1}$ in the manner specified. First, note that every edge in $\mathcal{T}$ along the path from $u_a$ to $L_1$ or $L_2$ and $u_b$ to $L_3$ or $L_4$ is contained in $\mathcal{R}$. Therefore, we let $a^s_i$ be the $v_s$-weight of the path from $u_a$ to $L_i$ with $i = 1,2$ and we have:

\begin{align*}
w_1 \cdot \alpha^\mathcal{T}_{L_1,n+1} &= a^1_1 + w_1^a + w_1^{n+1},\\
w_2 \cdot \alpha^\mathcal{T}_{L_1,n+1} &= a^2_1 + w_2^a + w_2^{n+1}, \\
w_1 \cdot \alpha^\mathcal{T}_{L_2,n+1} &= a^1_2 + w_1^a + w_1^{n+1}, \\
w_2 \cdot \alpha^\mathcal{T}_{L_2,n+1} &= a^2_2 + w_2^a + w_2^{n+1}.\\
\end{align*}

  Recall that our goal is for  both 1 and 2 to win one of 
   $\{ \alpha^\mathcal{T}_{L_1,n + 1}, \alpha^\mathcal{T}_{L_2,n + 1} \}$
    and \\ $\{ \alpha^\mathcal{T}_{L_3,n + 1},\alpha^\mathcal{T}_{L_4,n + 1}\}$. 
    Rearranging, we would like to have
\begin{align*}
 a^1_1 + w_1^a + w_1^{n+1} &< a^2_1 + w_2^a + w_2^{n+1}, \\
 a^1_2 + w_1^a + w_1^{n+1} &> a^2_2 + w_2^a + w_2^{n+1}\\
 \Rightarrow 
 (w_1^a + w_1^{n+1}) - (w_2^a + w_2^{n+1}) &< a^2_1 - a^1_1\\
 (w_1^a + w_1^{n+1}) - (w_2^a + w_2^{n+1}) &> a^2_2 - a^1_2\\.
\end{align*}

If we let $ (w_1^a + w_1^{n+1}) =  (a^2_1 - a^1_1)/2$ and $(w_2^a + w_2^{n+1}) = -(a^2_2 - a^1_2)/2$ then  $(w_1^a + w_1^{n+1}) - (w_2^a + w_2^{n+1})$ is just the average of $(a^2_1 - a^1_1)$ and $(a^2_2 - a^1_2)$. For $w$ chosen sufficiently generic, the inequalities above may both be switched, but regardless, we will have sent the vectors $\{ \alpha^T_{L_1,n + 1}, \alpha^T_{L_2,n + 1} \}$ into different matrices. By symmetry, we let $b^s_i$ be the $v_s$-weight of the path from $u_b$ to $L_i$ for $i = 3,4$. Then we will be done if the following system has a solution:

\begin{align*}
w_1^a + w_1^{n+1} &=  (a^2_1 - a^1_1)/2 \\
w_2^a + w_2^{n+1} &= -(a^2_2 - a^1_2)/2 \\
w_1^b + w_1^{n+1} &=  (b^2_1 - b^1_1)/2 \\
w_2^b + w_2^{n+1} &= -(b^2_2 - b^1_2)/2 \\
w_1^a + w_1^b &= v_1^e \\
w_2^a + w_2^b &= v_2^e. \\
\end{align*}

The last two equations are necessary so that $s$ wins $\alpha_{ij}^\mathcal{T}$ at $w$ if and only if $s$ wins $\alpha_{ij}^\mathcal{R} $ at $v$. The resulting matrix is full rank. \end{proof}

Finally, we have all of the pieces necessary to complete our proof.
\begin{proof}[Proof of Theorem \ref{gtisecants}]
Corollary \ref{cherrycorollary} shows that if $\mathcal{T}$ has
five or more cherries then
 $in_{\omega}(I_{2,n}^{\{2\}})  \not = J_\mathcal{T}^{\{2\}}.$
 Let $\mathcal{T}$ be a binary phylogenetic $[n]$-tree  with fewer than $5$ cherries. By Lemma \ref{jtdraisma}, $\dim(J_\mathcal{T}^{\{2\}}) \geq  4n - 10$, and since 
 $in_{\omega}(I_{2,n}^{\{2\}}) \subseteq 
  J_\mathcal{T}^{\{2\}}$, and 
 $\dim(in_{\omega}(I_{2,n}^{\{2\}})) =4n-10$, 
$\dim(in_{\omega}(I_{2,n}^{\{2\}})  ) = \dim(J_\mathcal{T}^{\{2\}}).$ 
By Theorem \ref{primesecants}, $in_{\omega}(I_{2,n}^{\{2\}})$ is prime and of the same dimension as $J_\mathcal{T}^{\{2\}}$, which implies
$in_{\omega}(I_{2,n}^{\{2\}})   = J_\mathcal{T}^{\{2\}}.$
\end{proof}

\section{Beyond the Second Secant}
\label{Beyond the Second Secant}

Based on the proof of Theorem \ref{gtisecants} and the result of  Lemma \ref{primesecants}, we have the following corollary which is a modification of the statement of Conjecture \ref{sethsconjecture}.

\begin{cor}
\label{primesecantscor} 
Let $\mathcal{T}$ be a binary phylogenetic $[n]$-tree , 
$\omega \in \mathbb{R}^n$ a weight vector, 
and $\tau \in \{\pm1\}^{n \choose 2}$ a sign vector such that 
$J_\mathcal{T} = \tau \cdot in_\omega(I_{2,n})$. 
Then 
$\tau \cdot in_\omega(I_{2,n}^{\{r\}}) = J_\mathcal{T}^{\{r\}}$ 
if and only if
 $\dim(J_\mathcal{T}^{\{r\}}) = 2rn - 2r^2 - r $.
\end{cor}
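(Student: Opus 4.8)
The plan is to reduce to the sign-free circular case and then combine the Sturmfels--Sullivant containment with the primeness result of Theorem \ref{primesecants} and the known dimension of the initial ideals of $I_{2,n}^{\{r\}}$.

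First I would dispose of the sign vector $\tau$. The map sending each $p_{ij}$ to $\tau_{ij}p_{ij}$ is a diagonal linear automorphism of $Z^n$, so it preserves Krull dimension and primeness, and, being linear, it commutes with the formation of secant ideals (the secant is built from sums of points, and a linear $\sigma$ respects $\sigma(x_1) + \cdots + \sigma(x_r) = \sigma(x_1 + \cdots + x_r)$). Hence from $J_\mathcal{T} = \tau \cdot in_\omega(I_{2,n})$ I obtain $J_\mathcal{T}^{\{r\}} = \tau \cdot \bigl(in_\omega(I_{2,n})^{\{r\}}\bigr)$, so that the equality $\tau \cdot in_\omega(I_{2,n}^{\{r\}}) = J_\mathcal{T}^{\{r\}}$ is equivalent, after applying $\tau^{-1} = \tau$, to the sign-free statement $in_\omega(I_{2,n}^{\{r\}}) = in_\omega(I_{2,n})^{\{r\}}$; likewise $\dim(J_\mathcal{T}^{\{r\}})$ is unchanged by $\tau$. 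Thus it suffices to argue for a circularly embedded tree with $\tau$ the all-ones vector, where $in_\omega(I_{2,n}) = J_\mathcal{T}$.

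The forward implication is immediate: if $in_\omega(I_{2,n}^{\{r\}}) = J_\mathcal{T}^{\{r\}}$, then since $in_\omega(I_{2,n}^{\{r\}})$ is an initial ideal of $I_{2,n}^{\{r\}}$ it has dimension $2rn - 2r^2 - r$ \cite{Kleepe1980}, and this must equal $\dim(J_\mathcal{T}^{\{r\}})$. For the reverse implication I would assume $\dim(J_\mathcal{T}^{\{r\}}) = 2rn - 2r^2 - r$ and invoke the Sturmfels--Sullivant containment $in_\omega(I_{2,n}^{\{r\}}) \subseteq (in_\omega(I_{2,n}))^{\{r\}} = J_\mathcal{T}^{\{r\}}$ together with Theorem \ref{primesecants}, which makes $in_\omega(I_{2,n}^{\{r\}})$ prime. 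As an initial ideal of $I_{2,n}^{\{r\}}$ this prime ideal has dimension $2rn - 2r^2 - r$, matching $\dim(J_\mathcal{T}^{\{r\}})$ by hypothesis. A prime ideal contained in another ideal of the same dimension must equal it, exactly as in the outline of Section \ref{outline of proof}: the containment gives $V(J_\mathcal{T}^{\{r\}}) \subseteq V(in_\omega(I_{2,n}^{\{r\}}))$ with the latter irreducible of equal dimension, forcing equality of varieties, hence of radicals, and primeness then forces equality of the ideals. This yields $in_\omega(I_{2,n}^{\{r\}}) = J_\mathcal{T}^{\{r\}}$, completing the reverse direction.

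The point to emphasize is that there is no genuine computational obstacle here; the corollary is precisely the leverage that Theorem \ref{primesecants} buys. All of the difficulty has been absorbed into that primeness result, which converts the hard ideal-theoretic question of whether $\tau \cdot in_\omega(I_{2,n}^{\{r\}}) = J_\mathcal{T}^{\{r\}}$ into the purely numerical question of whether $\dim(J_\mathcal{T}^{\{r\}})$ attains the maximal value $2rn - 2r^2 - r$. The only place demanding care is the bookkeeping around $\tau$ and the verification that secant formation commutes with the diagonal automorphism; beyond that, the argument is a direct assembly of the containment, primeness, and dimension facts already established.
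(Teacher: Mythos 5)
Your proof is correct and follows exactly the route the paper intends: the paper states this corollary without a written proof, deriving it from the Sturmfels--Sullivant containment $in_\omega(I_{2,n}^{\{r\}}) \subseteq J_\mathcal{T}^{\{r\}}$, the primeness from Theorem \ref{primesecants}, and the fact that every initial ideal of $I_{2,n}^{\{r\}}$ has dimension $2rn - 2r^2 - r$, which is precisely your assembly. Your explicit handling of the sign vector $\tau$ as a diagonal automorphism is a small addition of care beyond the paper, which simply normalizes to circular embeddings where $\tau$ is the all-ones vector.
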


We have already seen that Conjecture \ref{sethsconjecture} is not true for trees with more than $2r$ cherries. However, as $r$ increases, the number of cherries is not the only obstruction. The presence of other tree structures factors into a bound on the possible dimension of $J_\mathcal{T}^{\{r\}}$. 

Removing an edge from a binary phylogenetic $[n]$-tree creates two connected components each of which is a rooted binary phylogenetic $K$-tree for some $K \subset [n]$. If one of these rooted trees is a $k$-leaf rooted caterpillar then we call this rooted subtree a \emph{k-cluster} of $\mathcal{T}$. 
Cherries, then, may alternatively be referred to as 2-clusters. 
We let $c_k$ be the number of $k$-clusters in a tree. If leaves $i$ and $j$ are contained in an $s$-cluster, then we let $k$ be the smallest such $s$ and call the variable $p_{ij}$ a
 \emph{k-cluster variable for $\mathcal{T}$}.

\begin{ex}
Let $\mathcal{T}$ be the tree in Figure \ref{cluster13}.
Then  $\mathcal{T}$ has three 3-clusters on the leaves $\{1,2,3\}$, $\{4,5,6\}$, and $\{11,12,13\}$. The set of 2-cluster variables is $$\{p_{1,2}, p_{4,5},p_{7,8},p_{9,10},p_{11,12}\}$$ and the set of 3-cluster variables is
$$\{p_{1,3}, p_{2,3},p_{4,6},p_{5,6},p_{11,13}, p_{12,13}\}.$$
\end{ex}
Notice that the way clusters are nested, the number of $k$-cluster variables in a tree will be $(k-1)c_k$.

\begin{figure}[h!]
  \caption{A 13-leaf tree with five 2-clusters and three 3-clusters.}
  \label{cluster13}
  \centering
    \includegraphics[width=0.4\textwidth]{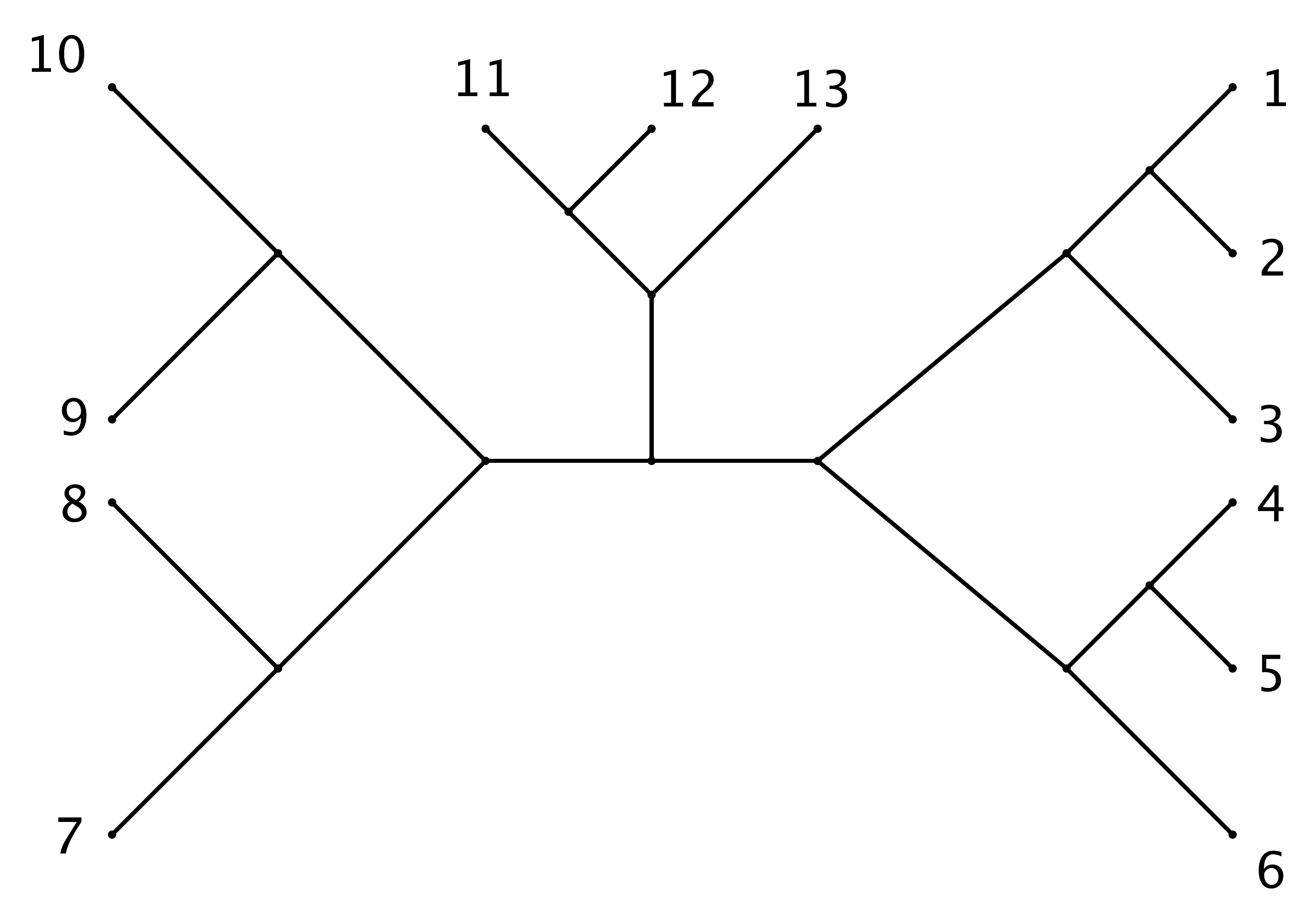}
\end{figure}

\begin{lemma}
\label{clusterdimbound}
 Let $c_k$ be the number of $k$-clusters in $\mathcal{T}$, then
  $$\dim(J_\mathcal{T}^{\{r\}}) \leq 2rn - 3r - \displaystyle\sum_{k=2}^{r} (r - k + 1)c_{k}.$$
\end{lemma}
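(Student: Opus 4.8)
The plan is to generalize the dimension bound of Lemma \ref{dimbound}, which handled only cherries (2-clusters), by accounting for the way higher-order $k$-cluster variables fail to interact with the secant construction. The strategy mirrors the proof of Lemma \ref{dimbound}: identify a collection of coordinate variables that either do not appear in the generators of $J_\mathcal{T}$ at all, or whose presence forces the relevant variety to split off linear factors, and then track how each such factor contributes only $1$ (rather than the expected $r$) to the dimension of the $r$-th secant.

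The key observation I would exploit is the counting fact noted just before the statement: a tree has exactly $(k-1)c_k$ many $k$-cluster variables. The cherry case corresponds to $k=2$, giving $c_2 = c$ variables, which is exactly the quantity subtracted in Lemma \ref{dimbound}. To see why higher clusters matter, I would examine how a $k$-cluster variable $p_{ij}$ behaves under the parametrization $\phi_\mathcal{T}$: because $i$ and $j$ sit inside a rooted caterpillar, the monomial $\phi_\mathcal{T}(p_{ij})$ involves only the few edges internal to that caterpillar, and these variables participate in the generating binomials of $J_\mathcal{T}$ in a very constrained way. The idea is that, within a $k$-cluster, the coordinate structure is ``almost linear'': taking successive secants recovers fewer new degrees of freedom than the generic count $r\dim(V) + (r-1)$ would predict. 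Concretely, I would argue that a $k$-cluster contributes a deficiency of $(r-k+1)$ per cluster variable direction whenever $k \leq r$, since only the first $k-1$ secant summands can independently use the cluster directions before the caterpillar geometry is exhausted.

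The cleanest route is to write $V(J_\mathcal{T})$ as a product in which the cluster directions factor off, analogous to the decomposition $V(J_\mathcal{T}) = V \times \mathbb{C}^c$ used for cherries, and then bound the dimension of the join of the product. For each $k$-cluster with $2 \leq k \leq r$, the rooted caterpillar on $k$ leaves contributes a low-dimensional toric piece whose $r$-th secant fills up after only $k-1$ summands; the remaining $r - (k-1) = r - k + 1$ summands add nothing in those directions. Summing this deficiency $(r-k+1)$ over all $(k-1)c_k$ — more carefully, over the $c_k$ clusters with the appropriate multiplicity — against the generic bound $\dim(V(J_\mathcal{T})^{\{r\}}) \leq r\dim(V(J_\mathcal{T}))$ yields the stated inequality $2rn - 3r - \sum_{k=2}^{r}(r-k+1)c_k$. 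I would verify the arithmetic reduces to Lemma \ref{dimbound} when only $c_2$ is nonzero, as a consistency check.

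The main obstacle will be making rigorous the claim that each $k$-cluster imposes exactly the deficiency $(r-k+1)$ and that these deficiencies from distinct clusters are independent and therefore additive. Clusters can be nested (a $k$-cluster may contain smaller clusters), so I must be careful that a variable counted as a $k$-cluster variable is not double-counted via a smaller enclosing cluster; the definition choosing the \emph{smallest} $s$ for which $p_{ij}$ lies in an $s$-cluster is precisely what prevents this, and I would lean on it heavily. The most delicate step is controlling the join dimension in the presence of this nesting: I expect to need either a careful product/cone decomposition of $V(J_\mathcal{T})$ that isolates each cluster's contribution, or a direct tropical argument in the style of Lemma \ref{lemma: Draisma} showing that no choice of weight vectors $v$ can win more than $\dim(V) - \sum_k (r-k+1)c_k/(r) \cdot (\ldots)$ worth of directions. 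Getting the bookkeeping of nested clusters exactly right, so that the deficiencies sum to the clean closed form stated, is where the real work lies.
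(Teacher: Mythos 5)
Your plan has the right arithmetic target and correctly identifies that the argument should generalize Lemma \ref{dimbound} using the count of $(k-1)c_k$ cluster variables, but it is missing the two ideas that actually make the proof work, and you flag the missing piece yourself (``a careful product/cone decomposition \dots is where the real work lies''). The first gap: for $k\geq 3$ the $k$-cluster variables \emph{do} appear in the generators of $J_\mathcal{T}$ --- for a $3$-cluster on $\{1,2,3\}$ and any outside leaf $m$, the quartet $12|3m$ contributes $p_{13}p_{2m}-p_{1m}p_{23}$, which involves $p_{13}$ and $p_{23}$ --- so $V(J_\mathcal{T})$ does \emph{not} factor as a product isolating the cluster directions, and the decomposition you propose does not exist. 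The paper's resolution is to replace $V(J_\mathcal{T})$ by the \emph{larger} variety $W=V(J_\mathcal{T}^E)$, where $J_\mathcal{T}^E$ is the elimination ideal obtained by eliminating all cluster variables and re-embedding in $Z^n$. Then $V(J_\mathcal{T})\subseteq W$, the cluster variables are genuinely unconstrained on $W$, so $W=W'\times\mathbb{C}^N$ with $N=\sum_{k=2}^r(k-1)c_k$, and one bounds $\dim(J_\mathcal{T}^{\{r\}})\leq\dim(W^{\{r\}})\leq r\dim(W')+N$. There is no ``toric piece whose $r$-th secant fills up after $k-1$ summands''; the only fact used about the cluster directions is that they span a linear space of dimension $k-1$ per cluster, and linear spaces are join-stable.

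The second gap is that the subtracted term then requires showing the projection loses dimension: one needs $\dim(W')\leq 2n-3-\sum_{k=2}^r c_k$, i.e., each cluster's worth of variables accounts for at least one dimension of $V(J_\mathcal{T})$ that disappears under the projection. Your proposal offers no mechanism for this. The paper proves it by putting a grading on $Z^n$ in which the designated cluster's variables have degree one; every generator of $J_\mathcal{T}$ is homogeneous for this grading, so any reduced Gr\"obner basis element involving a designated cluster variable involves at least two of them (otherwise primeness of $J_\mathcal{T}$ would force a variable to vanish). Hence after eliminating $k-2$ of the $k-1$ designated variables the last one is a free coordinate, and projecting it away drops the dimension by one; summing over clusters gives the bound, and the arithmetic $r(2n-3-\sum c_k)+\sum(k-1)c_k=2rn-3r-\sum(r-k+1)c_k$ closes the proof. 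Without this Gr\"obner/homogeneity step, your heuristic deficiency count of $(r-k+1)$ per cluster has no justification.
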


\begin{proof} 
Let $\omega$ be a weight vector such that $J_\mathcal{T} = in_\omega(I_{2,n})$. Let $J_\mathcal{T}^E$ be the ideal constructed  by eliminating all $k$-cluster variables from $J_\mathcal{T}$ for $1 \leq k \leq r-1$,  and embedding this ideal in $Z^n$.
Define  $V(J_\mathcal{T}^E) = W$ and
note that $V(J_\mathcal{T}) \subseteq W$ and 
$\dim(J_\mathcal{T}^{\{r\}}) \leq \dim(\mathcal{I}(W)^{\{r\}})$. 
There are no restrictions on the $\sum_{k=2}^{r} (k-1)c_{i}$ 
eliminated variables in $W$, so we may write 
 $W = W' \times \mathbb{C}^{\sum_{k=2}^r (k-1)c_{i}}$. 
Since $\mathbb{C}^{\sum_{k=2}^r (k-1)c_{k}}$ is a linear space, 
$W^{\{r\}} = 
W'^{\{r\}}\times{\mathbb{C}^{\sum_{k=2}^r (k-1)c_{k}}}$. 
We also observe that $W'$ is a cone since it is a coordinate
projection of a cone. 
Thus, $\dim(W^{\{r\}}) \leq r\dim(W'^{\{r\}}) + \sum_{k=2}^r (k-1)c_{k}$. 

Now we seek a bound for $\dim(W'^{\{r\}})$. Choose a specific $k$-cluster in $\mathcal{T}$, define a grading with every $k$-cluster variable in that $k$-cluster having weight one and every other variable having weight zero. Observe that each binomial generator of $J_\mathcal{T}$ is homogeneous with respect to this grading. 
 Thus, the equations in any reduced Gr\"obner basis for $J_\mathcal{T}$ with respect to any monomial order contain at least two distinct $k$-cluster variables from the designated
 $k$-cluster if they contain any at all. 
 If not, by homogeneity, there exists an equation in the reduced
Gr\"obner basis in which $p_{ij}$, a $k$-cluster variable from the designated
 $k$-cluster, can be factored. Since $J_\mathcal{T}$ is prime, that implies that $p_{ij}$ is zero, which is evidently not true from the parameterization. Therefore, choosing an elimination order and eliminating any $(k-2)$ of the designated $k$-cluster variables from $J_\mathcal{T}$ eliminates all of the $(k-1)$ designated $k$-cluster variables. Thus, projecting away all of the $k$-cluster variables from a given $k$-cluster in $V(J_\mathcal{T})$ yields a variety of at least one dimension less. Applying the same argument to each $k$-cluster implies $\dim(W') \leq 2n - 3 - \sum_{k=2}^{r} c_k$, and the result follows.
\end{proof}

In the case where $r=2$, this is just a restatement of Lemma \ref{dimbound}. When $r=3$, we have $\dim(I_{2,n}^{\{3\}}) = 6n - 21$, so this tells us that it is impossible for $J_\mathcal{T}^{\{3\}} = I_{2,n}^{\{3\}}$ when $2c_2 + c_3  > 12$. 


\begin{ex}
\label{13leafex}
Let $\mathcal{T}$ be the $13$-leaf tree pictured in Figure \ref{cluster13}.
Then $c_2 = 5$, $c_3 = 3$, and $2c_2 + c_3  = 13$. Lemma \ref{clusterdimbound}  tells us that $\dim(J_\mathcal{T}^{\{3\}}) \leq 66 < 67 = \dim(I_{2,n}^{\{3\}})$ so that $J_\mathcal{T}^{\{3\}}\not=I_{2,n}^{\{3\}}$. Evaluating  the Jacobian matrix at a
random point we find that $\dim(J_\mathcal{T}^{\{3\}}) = 66$.
\end{ex}

\bigskip
\bigskip

Finally, one might wonder if we can modify Conjecture \ref{sethsconjecture} as follows.

\begin{conj}
\label{highersecconj} 
Let $\mathcal{T}$ be a binary phylogenetic $[n]$-tree, 
$\omega \in \mathbb{R}^n$ a weight vector, 
and $\tau \in \{\pm1\}^{n \choose 2}$ a sign vector such that 
$J_\mathcal{T} = \tau \cdot in_\omega(I_{2,n})$. 
Then 
$\tau \cdot in_\omega(I_{2,n}^{\{r\}}) = J_\mathcal{T}^{\{r\}}$ 
if and only if
  $$\displaystyle\sum_{k=2}^{r} (r - k + 1)c_{k} < 2r^2 - 2r.$$
\end{conj}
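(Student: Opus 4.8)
The plan is to reduce the biconditional to a single dimension statement and then attack that statement with the tropical secant dimension method. By Corollary \ref{primesecantscor}, the equality $\tau \cdot in_\omega(I_{2,n}^{\{r\}}) = J_\mathcal{T}^{\{r\}}$ is equivalent to $\dim(J_\mathcal{T}^{\{r\}}) = 2rn - 2r^2 - r$. Moreover, the containment $in_\omega(I_{2,n}^{\{r\}}) \subseteq J_\mathcal{T}^{\{r\}}$ of \cite{Sturmfels2006} forces $\dim(J_\mathcal{T}^{\{r\}}) \leq \dim(in_\omega(I_{2,n}^{\{r\}})) = 2rn - 2r^2 - r$, the last value being Kleppe's formula \cite{Kleepe1980} together with the fact that initial ideals preserve dimension. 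Hence the target dimension equality is equivalent to the single inequality $\dim(J_\mathcal{T}^{\{r\}}) \geq 2rn - 2r^2 - r$, and proving Conjecture \ref{highersecconj} amounts to establishing
$$\dim(J_\mathcal{T}^{\{r\}}) \geq 2rn - 2r^2 - r \iff \sum_{k=2}^r (r-k+1)c_k < 2r^2 - 2r.$$

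The forward implication is the easy half and is already essentially contained in Lemma \ref{clusterdimbound}. That lemma gives $\dim(J_\mathcal{T}^{\{r\}}) \leq 2rn - 3r - \sum_{k=2}^r (r-k+1)c_k$, so whenever $\sum_{k=2}^r (r-k+1)c_k > 2r^2 - 2r$ we obtain $\dim(J_\mathcal{T}^{\{r\}}) < 2rn - 2r^2 - r$ and the ideals cannot be equal. I would flag here a genuine subtlety: the cluster bound yields only the non-strict threshold directly, since at the boundary value $\sum_{k=2}^r (r-k+1)c_k = 2r^2 - 2r$ it permits $\dim(J_\mathcal{T}^{\{r\}}) = 2rn - 2r^2 - r$, and for $r = 2$ Theorem \ref{gtisecants} shows that equality does hold there (the four-cherry case). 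Reconciling this with the strict inequality in the statement is the first thing to settle: one must either sharpen Lemma \ref{clusterdimbound} to produce a strict drop at the boundary for $r \geq 3$, or conclude that the correct threshold is in fact $\leq 2r^2 - 2r$.

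The reverse implication is the heart of the matter and the step I expect to be the main obstacle. Here one must show that when the cluster sum is small, the bound of Lemma \ref{clusterdimbound} is attained, i.e. produce the lower bound $\dim(J_\mathcal{T}^{\{r\}}) \geq 2rn - 2r^2 - r$. The natural tool is the tropical secant dimension method of \cite{Draisma2008} recorded in Lemma \ref{lemma: Draisma}, generalizing the $r=2$ argument of Lemma \ref{jtdraisma} to arbitrary $r$. Viewing $V(J_\mathcal{T})$ as the affine cone parametrized by $\phi_\mathcal{T}$, I would seek a vector $v = (v_1,\ldots,v_r) \in \bigoplus_{i=1}^r \rr^{2n-3}$ whose winning-direction sets $D_i(v)$ each span a space as large as the cluster structure permits, so that $\sum_{i=1}^r \dim_\rr \langle D_i(v) \rangle_\rr$ meets the target. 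As in Lemma \ref{jtdraisma}, I would induct on $n$, attaching a new leaf outside every cluster and distributing the newly created pendant edges among the $r$ sheets while preserving all previously won directions, certifying the rank of each $D_i$-matrix through the Tree-Metric theorem by unique recovery of edge lengths.

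The difficulties are genuinely new for $r \geq 3$. One must manage $r$ sheets simultaneously rather than two; the $k$-clusters for each $2 \leq k \leq r$ obstruct the span by the differing amounts $(r-k+1)$, so the winning-direction bookkeeping in the inductive step is considerably more delicate than the cherry argument; and the finite family of cluster-minimal base trees is larger, so the base cases would likely again have to be certified computationally, as in the worksheet accompanying Lemma \ref{jtdraisma}. The essential content — that the cluster defects are the \emph{only} obstructions and that no further secant deficiency arises for generic weights — is exactly the part that does not follow formally from the $r=2$ case and is where I expect the real work to lie.
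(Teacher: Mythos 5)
This statement is Conjecture~\ref{highersecconj}, which the paper itself does not prove: the authors only verify it by evaluating Jacobians at random points for $r=3,4$ on trees with up to 18 leaves, and they explicitly remark that it is unclear how to generalize the inductive step of Lemma~\ref{jtdraisma}. Your proposal correctly reproduces the architecture a proof would need --- reduce the biconditional to the single dimension condition via Corollary~\ref{primesecantscor}, obtain the ``only if'' direction from the cluster bound of Lemma~\ref{clusterdimbound}, and attack the ``if'' direction with the tropical secant dimension method generalizing Lemma~\ref{jtdraisma} --- but it does not close either direction. The reverse implication, which you yourself call ``the heart of the matter,'' is left entirely as a plan: no vector $v$ is constructed, no base cases are certified, and no inductive step is carried out for $r\geq 3$ (where, as you note, one must manage $r$ sheets and clusters of all sizes $2\leq k\leq r$ simultaneously). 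That is exactly the part the paper could not do either, so what you have is a roadmap with a genuine gap, not a proof.

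Your observation about the boundary is a real and valuable catch, and it goes further than you pressed it. Lemma~\ref{clusterdimbound} only rules out equality when $\sum_{k=2}^{r}(r-k+1)c_k > 2r^2-2r$, so the ``only if'' direction of the conjecture with \emph{strict} inequality is unsupported at the boundary value; worse, for $r=2$ the boundary is $c_2=4$, and Theorem~\ref{gtisecants} shows equality \emph{does} hold for four-cherry trees, so the conjecture as literally written contradicts the paper's own main theorem at $r=2$ unless the inequality is read as non-strict ($\leq 2r^2-2r$). Identifying this misstatement is correct and useful, but noting it is not the same as resolving it. In summary: your reduction and your diagnosis of where the difficulty lies are sound and consistent with the paper, but the statement remains unproven in your proposal just as it does in the paper.
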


We have investigated $\dim(J_\mathcal{T}^{\{r\}})$ for $r=3$ and $r=4$ and several trees up to 18 leaves. By evaluating the Jacobian matrix at random points, we have found in each case that the conjecture holds. It may be possible to prove Conjecture \ref{clusterdimbound} utilizing induction as we did in Lemma \ref{jtdraisma}, however, there are many more base cases
to handle and it is unclear how to generalize the induction step.

\section*{Acknowledgements}

Colby Long was partially supported by the US National Science Foundation (DMS 0954865). We would like to thank Seth Sullivant for suggesting this problem and for many helpful comments during the completion of this work.
\bibliography{references}

\begin{thebibliography}{10}

\bibitem{Draisma2008}
Jan Draisma.
\newblock A tropical approach to secant dimensions.
\newblock {\em J. Pure Appl. Algebra}, 212(2):349--363, 2008.

\bibitem{Drton2008}
Mathias Drton, Bernd Sturmfels, and Seth Sullivant.
\newblock Algebraic factor analysis: Tetrads, pentads and beyond.
\newblock {\em Probability Theorey and Related Fields}, 138(3--4):463--493,
  2007.

\bibitem{Garcia2005}
Luis~David Garcia, Michael Stillman, and Bernd Sturmfels.
\newblock Algebraic geometry of {B}ayesian networks.
\newblock {\em Journal of Symbolic Computation}, 39(3-4):331--355, March-April
  2005.

\bibitem{Jonsson2007}
Jakob Jonsson and Volkmar Welker.
\newblock A spherical initial ideal for {P}faffians.
\newblock {\em Illinois J. Math}, 51(4):1397--1407, 2007.

\bibitem{Kalkbrener1995}
Michael Kalkbrener and Bernd Sturmfels.
\newblock Initial complexes of prime ideals.
\newblock {\em Advances in Mathematics}, 116(2):365--376, 1995.

\bibitem{Kleepe1980}
H.~Kleepe and D.~Laksov.
\newblock The algebraic structure and deformation of {P}faffian schemes.
\newblock {\em J. Algebra}, 64:167--189, 1980.

\bibitem{Long2016}
Colby Long.
\newblock {\em Algebraic Geometry of Phylogenetic Models}.
\newblock PhD thesis, North Carolina State University, Raleigh, North Carolina,
  March 2016.

\bibitem{Miller2005}
Ezra Miller and Bernd Sturmfels.
\newblock {\em Combinatorial Commutative Algebra}.
\newblock Springer, 2005.

\bibitem{Pachter2004}
Lior Pachter and David Speyer.
\newblock Reconstructing trees from subtree weights.
\newblock {\em Applied Mathematical Letters}, 17:615--621, 2004.

\bibitem{Pachter2005}
Lior Pachter and Bernd Sturmfels, editors.
\newblock {\em Algebraic Statistics for Computational Biology}, page 101.
\newblock Cambridge University Press, Cambridge, United Kingdom, 2005.

\bibitem{Semple2003}
Charles Semple and Mike Steel.
\newblock {\em Phylogenetics}.
\newblock Oxford University Press, Oxford, 2003.

\bibitem{Speyer2004}
David Speyer and Bernd Sturmfels.
\newblock The tropical {G}rassmannian.
\newblock {\em Adv. Geom.}, 4:389--411, 2004.

\bibitem{sturmfels1996grobner}
Bernd Sturmfels.
\newblock {\em Gr{\"o}bner bases and Convex Polytopes}, volume~8.
\newblock American Mathematical Soc., 1996.

\bibitem{Sturmfels2006}
Bernd Sturmfels and Seth Sullivant.
\newblock Combinatorial secant varieties.
\newblock {\em Quarterly Journal of Pure and Applied Mathematics}, 2:285--309,
  2006.

\bibitem{Sullivant2008}
Seth Sullivant.
\newblock Algebraic geometry of {G}aussian {B}ayesian networks.
\newblock {\em Advances in Applied Mathematics}, 40(4):482--513, 2008.

\end{thebibliography}
\bibliographystyle{plain}

\end{document}